\numberwithin{equation}{section}
\newtheorem{thm}[equation]{Theorem}
\newtheorem{cor}[equation]{Corollary}
\newtheorem{lm}[equation]{Lemma}
\newtheorem{prp}[equation]{Proposition}
\theoremstyle{definition}
\newtheorem{df}[equation]{Definition}
\newtheorem{exmp}[equation]{Example}
\theoremstyle{remark}
\newtheorem{rem}[equation]{Remark}
\newtheorem{obs}[equation]{Observation}
\newcommand{\sprf}{\noindent{\it Proof.}}
\newcommand{\sqed}{\hfill\rule{1.3mm}{3mm}\medskip}
\newcounter{stareq}
\def\thestareq{\fnsymbol{stareq}}
\DeclareMathOperator{\BN}{\mathbb{N}} 
\DeclareMathOperator{\BZ}{\mathbb{Z}} 
\DeclareMathOperator{\BR}{\mathbb{R}} 
\newcommand{\bd}{\begin{description}}
\newcommand{\ed}{\end{description}}
\begin{document}

\bibliographystyle{amsalpha}

\title{Homotopical Complexity of 2D Billiard Orbits}

\author{Lee M. Goswick and N\'andor Sim\'anyi}
\address[Lee M. Goswick]{The University of Alabama at Birmingham\\
  Department of Mathematics\\
  1300 University Blvd., Suite 452\\
  Birmingham, AL 35294 U.S.A.}
\address[N\'andor Sim\'anyi]{The University of Alabama at Birmingham\\
  Department of Mathematics\\
  1300 University Blvd., Suite 452\\
  Birmingham, AL 35294 U.S.A.}
  
\email[Lee M. Goswick]{goswick@amadeus.math.uab.edu}
\email[N\'andor Sim\'anyi]{simanyi@math.uab.edu}

\thanks{The second author was partially supported by NSF grants DMS 0457168 
and DMS 0800538.}
 
\date{\today}

\begin{abstract}
Traditionally, rotation numbers for toroidal billiard flows are
defined as the limiting vectors of average displacements per time on
trajectory segments. Naturally, these creatures live in the
(commutative) vector space $\BR^n$, if the toroidal billiard is given
on the flat $n$-torus.  The billiard trajectories, being curves, often
getting very close to closed loops, quite naturally define elements of
the fundamental group of the billiard table. The simplest non-trivial
fundamental group obtained this way belongs to the classical Sinai
billiard, i.e. the billiard flow on the 2-torus with a single,
strictly convex obstacle (with smooth boundary) removed. This
fundamental group is known to be the group $\textbf{F}_2$ freely
generated by two elements, which is a heavily noncommutative,
hyperbolic group in Gromov's sense. We define the homotopical rotation
number and the homotopical rotation set for this model, and provide
lower and upper estimates for the latter one, along with checking the
validity of classically expected properties, like the density (in the
homotopical rotation set) of the homotopical rotation numbers of
periodic orbits.

The natural habitat for these objects is the infinite cone erected
upon the Cantor set $\text{Ends}(\textbf{F}_2)$ of all ``ends'' of the
hyperbolic group $\textbf{F}_2$. An element of
$\text{Ends}(\textbf{F}_2)$ describes the direction in (the Cayley
graph of) the group $\textbf{F}_2$ in which the considered trajectory
escapes to infinity, whereas the height function $t$ ($t \ge 0$) of
the cone gives us the average speed at which this escape takes place.

The main results of this paper claim that the orbits can only escape
to infinity at a speed not exceeding $\sqrt{2}$, and any direction
$e\in\text{Ends}(F_2)$ for the escape is feasible with any prescribed
speed $s$, $0\leq s\leq \sqrt{2}/2$.  This means that the radial upper
and lower bounds for the rotation set $R$ are actually pretty close to
each other.
\end{abstract}

\subjclass{11R52, 52C07}

\keywords{Rotation number, rotation set, hyperbolic billiards,
trajectory, orbit segment, fundamental group, Cayley graph, ideal boundary.}

\maketitle

\section{Introduction}
The concept of rotation number finds its origin in the study of the average
rotation around the circle $S^1$ per iteration, as classically defined by H.
Poincar\'e in the 1880's, when one iterates an orientation-preserving circle
homeomorphism $f:S^1 \rightarrow S^1$. This is equivalent to studying the
average displacement $(1/n)(F^n(x)-x)$ ($x \in \BR$) for the iterates
$F^n$ of a lifting $F:\BR \rightarrow \BR$ of $f$ on the universal
covering space $\BR$ of $S^1$. The study of fine homotopical properties of
geodesic lines on negatively curved, closed surfaces goes back at least to
Morse \cite{Mor24}. As far as we know, the first appearance of the concept of
homological rotation vectors (associated with flows on manifolds) was the
paper of Schwartzman \cite{Sch57}, see also Boyland \cite{Boy00} for further references
and a good survey of homotopical invariants associated with geodesic flows.
Following an analogous pattern, in \cite{BMS06} we defined the (still commutative)
rotation numbers of a $2D$ billiard flow on the billiard table $\mathbb{T}^2 =
\BR^2/\BZ^2$ with one convex obstacle (scatterer) $\mathcal{O}$
removed. Thus, the billiard table (configuration space) of the model in
\cite{BMS06} was $\mathcal{Q} = \mathbb{T}^2\setminus\mathcal{O}$.  Technically
speaking, we considered trajectory segments $\{x(t) | 0 \le t \le T\} \subset
\mathcal{Q}$ of the billiard flow, lifted them to the universal covering space
$\BR^2$ of $ \mathbb{T}^2$ (not of the configuration space $\mathcal{Q}$),
and then systematically studied the rotation vectors as limiting vectors of
the average displacement $(1/T)(\tilde{x}(T)-\tilde{x}(0)) \in \BR^2$
of the lifted orbit segments $\{\tilde{x}(t)|0 \le t \le T\}$ as $T
\rightarrow \infty$. These rotation vectors are still ``commutative'', for
they belong to the vector space $\BR^2$.

Despite all the advantages of the homological (or ``commutative'') rotation
vectors (i. e. that they belong to a real vector space, and this
provides us with useful tools to construct actual trajectories with prescribed
rotational behaviour), in our current view the ``right'' lifting of the
trajectory segments $\{x(t)|0 \le t \le T\} \subset \mathcal{Q}$ is to lift
these segments to the universal covering space of $\mathcal{Q} =
\mathbb{T}^2\setminus\mathcal{O}$, not of $\mathbb{T}^2$. This, in turn,
causes a profound difference in the nature of the arising rotation
``numbers'', primarily because the fundamental group $\pi_1(\mathcal{Q})$ of
the configuration space $\mathcal{Q}$ is the highly complex group
$\textbf{F}_2$ freely generated by two generators (see section 2 below or
\cite{Mas91}). After a bounded modification, trajectory segments $\{x(t)| 0 \le t
\le T\} \subset \mathcal{Q}$ give rise to closed loops $\gamma_T$ in
$\mathcal{Q}$, thus defining an element $g_T = [\gamma_T]$ in the fundamental
group $\pi_1(\mathcal{Q}) = \textbf{F}_2$. The limiting behavior of $g_T$ as
$T \rightarrow \infty$ will be investigated, quite naturally, from two
viewpoints:
\begin{enumerate}
   \item The direction ``$e$'' is to be determined, in which the element $g_T$
   escapes to infinity in the hyperbolic group $\textbf{F}_2$ or, equivalently,
   in its Cayley graph $\mathcal{G}$, see section 2 below. All possible
   directions $e$ form the horizon or the so called ideal boundary
   $\text{Ends}(\textbf{F}_2)$ of the group $\textbf{F}_2 =
   \pi_1(\mathcal{Q})$, see \cite{CoP93}.
   \item The average speed $s = \lim_{T \rightarrow \infty}
   (1/T)\text{dist}(g_T, 1)$ is to be determined, at which the element
   $g_T$ escapes to infinity, as $T \rightarrow \infty$. 
   These limits (or limits $\lim_{T_n \rightarrow \infty} 
   (1/T_n)\text{dist}(g_{T_n}, 1)$ for sequences of positive reals $T_n \nearrow \infty$) are nonnegative real numbers.
\end{enumerate}
The natural habitat for the two limit data $(s,e)$ is the infinite cone
\begin{displaymath}
   C = ([0, \infty) \times \text{Ends}(\textbf{F}_2))/(\{0\} \times \text{Ends}(\textbf{F}_2))
\end{displaymath}
erected upon the set $\text{Ends}(\textbf{F}_2)$, the latter supplied with the
usual Cantor space topology. Since the homotopical ``rotation numbers'' $(s,e)
\in C$ (and the corresponding homotopical rotation sets) are defined in terms
of the noncommutative fundamental group $\pi_1(\mathcal{Q}) = \textbf{F}_2$, these notions will be justifiably called homotopical or noncommutative rotation
numbers and sets.

In accordance with \cite{BMS06}, we will focus on systems with a so-called ``small
obstacle'', i.e., when the sole obstacle $\mathcal{O}$ is contained by some
circular disk of radius less than $\sqrt{2}/4$. Furthermore, again
following \cite{BMS06}, most of the time we will restrict our attention to the
so-called \textit{admissible orbits}, see the paragraph right after the proof
of Lemma \ref{bounded_dif} in \cite{BMS06}. The corresponding rotation set will be the so-called admissible homotopical rotation set $AR \subset C$. The homotopical rotation
set $R$ defined without the restriction of admissibility will be denoted by
$R$. Plainly, $AR \subset R$ and these sets are closed subsets of the cone
$C$.

The main results of this paper are theorems \ref{upper_bnd} and
\ref{lower_bnd}.  The former claims that the set $R$ is contained in
the closed ball $B(0,\,\sqrt{2})$ of radius $\sqrt{2}$ centered at the
vertex $0 = \{0\} \times \text{Ends}(\textbf{F}_2)$ of the cone
$C$. In particular, both sets $AR$ and $R$ are compact. The latter
result claims that the set $AR$ contains the closed ball
$B(0,\sqrt{2}/2)$ of $C$, provided that the radius $r_0$ of the sole
circular obstacle is less than $\sqrt{5}/10$. Thus, these two results
provide a pretty detailed description of the homotopical complexity of
billiard orbits: Any direction $e\in\text{Ends}(F_2)$ is feasible for
the trajectory to go to infinity, the speed of escape $s$ cannot be
bigger than $\sqrt{2}$, whereas any speed $s$, $0\leq s\leq
\sqrt{2}/2$, is achievable in any direction $e\in\text{Ends}(F_2)$.
Example \ref{sharp_exmp} shows that, in sharp contrast with the
expectations and the analogous results for the commutative rotation
numbers in \cite{BMS06}, the star-shaped set $R$ is not contained in
the unit ball $B(0,1)$ of $C$: it contains some radii of length
$\sqrt{2}$, thus the upper estimate of Theorem \ref{upper_bnd}, at
least as a direction independent upper bound for the radial size of
$R$, is actually sharp.

Finally, in the concluding Section \ref{conc_sec} we present a corollary (Theorem \ref{htop_thm}) of the proofs of Section \ref{main_sec} and make a few remarks. The theorem provides an effective constant as an upper bound for the topological entropy
$h_{top}(r_0)$ of the billiard flow, where $r_0$ is the radius of the sole
circular obstacle. The upper bound we obtain is explicit, unlike the one
obtained in \cite{BFK98} for the topological entropy of the flow.

Remark \ref{periodic_rem} asserts what is always expected for ``decent'' dynamical systems regarding the relation between homotopical rotation sets and periodic orbits:
the homotopical rotation numbers of periodic admissible orbits form a dense
subset in $AR$.

Finally, remarks \ref{n_obs_rem}--\ref{dim_rem} briefly outline the possibilities of
some interesting follow-up research, namely the investigation and understanding of
the homotopical rotation numbers for $2D$ toroidal billiards with $N$ round
obstacles.

\section{Main Results}\label{main_sec}
\subsection*{Lower Estimate for the Homotopical Rotation Set}

The configuration space $\mathcal{Q}$ (the billiard table) of our system is
the punctured $2D$-torus $\mathcal{Q} = \mathbb{T}^2\setminus \mathcal{O}$,
where the removed obstacle $\mathcal{O}$ is the open disk of radius $r_0$, $0
< r_0 < \sqrt{2}/4$, centered at the origin $(0, 0)$. (For simplicity
we assume that the obstacle is a round disk, though this is only an
unimportant technical condition, see Remark \ref{arb_obs_rem} below.)
The upper bound of $\sqrt{2}/4$ is exactly the condition of having a so-called ``small
obstacle'' in the sense of \cite{BMS06}.

The fundamental group $\pi_1(\mathcal{Q})$ of $\mathcal{Q}$ is
classically known to be the group $\textbf{F}_2 = \langle a, b
\rangle$, freely generated by the elements $a$ and $b$, see, for
example, \cite{Mas91}.  Perhaps the simplest way to see this is to consider a
simply connected fundamental domain
\begin{eqnarray}
  \mathcal{Q} &=& \{ x = (x_1, x_2) \in \BR^2 | 0 \le x_1, x_2 \le
  1, \ \text{dist}(x, \BZ^2) \ge r_0 \},
\end{eqnarray}
where the upper and lower horizontal sides of this domain are identified via
the equivalence relation $(x_1, 0) \sim (x_1, 1)$, $r_0 \le x_1 \le 1-r_0$,
and the left and right vertical sides are similarly identified via $(0, x_2)
\sim (1, x_2)$ for $r_0 \le x_2 \le 1-r_0$.
\begin{figure}[h]
   \centerline{
   \includegraphics[width=0.6\textwidth, height=0.3\textheight]{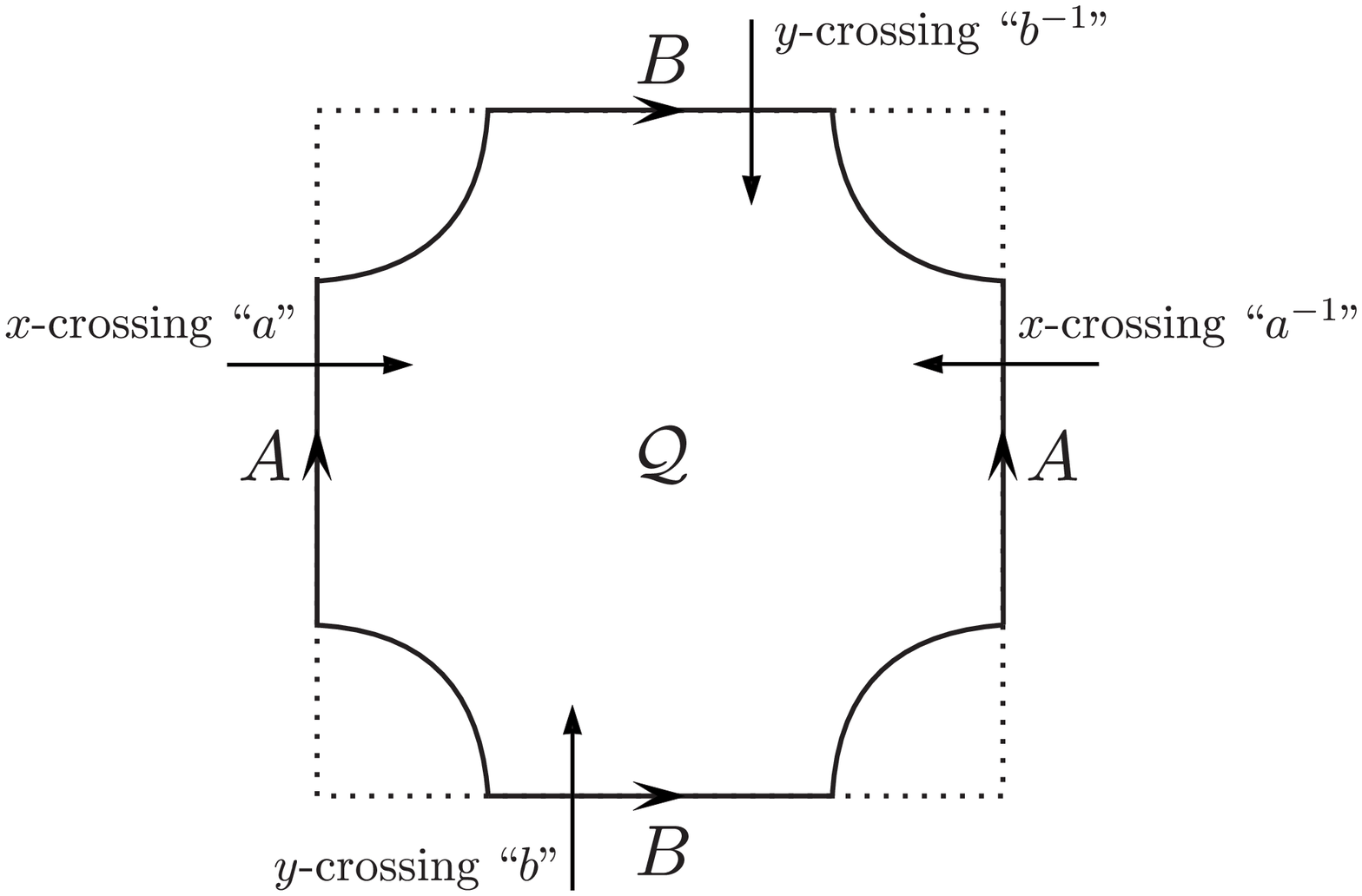}
   }
   \label{Figure 1}
   \caption{}
\end{figure}
The domain $\mathcal{Q}$ is obtained by identifying the opposite sides $A-A$
and $B-B$, just as the arrows indicate.  This space is homeomorphic to the
topological space that we obtain by gluing together two copies of a closed
strip $S^1 \times [-1,1]$ by identifying the rectangle $R_1 = [-1/10,
1/10] \times [-1,1]$ (in the first copy) with the same rectangle $R_2
= R_1$ (in the second copy) via the map $(x,y) \mapsto (y/10, 10x)$,
$|x| \le 1/10$, $|y| \le 1$, see Fig. 2.
\begin{figure}[h]
   \centerline{
   \includegraphics[width=0.6\textwidth, height=0.25\textheight]{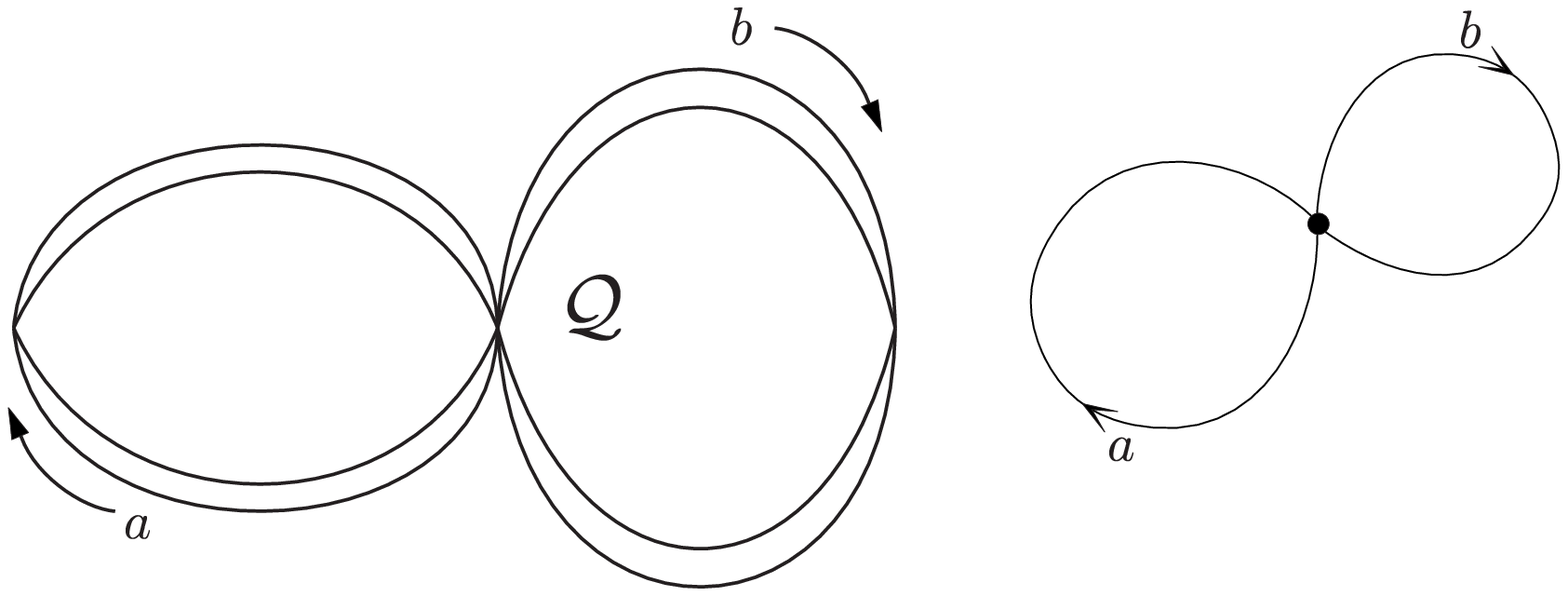}
   }
   \label{Figure 2}
   \caption{}
\end{figure}
The space $\mathcal{Q}$ is homotopically equivalent to the ``bouquet'' of two
circles, see the right part of Fig. 2. The fundamental group of the latter
space is classically known to be (see \cite{Mas91}) the group $\textbf{F}_2 =
\langle a, b \rangle$ freely generated by two elements ``$a$'' and ``$b$'', so
that ``$a$'' corresponds to making a loop along the first circle (in some
selected direction), whereas the generator ``$b$'' corresponds to making a
similar loop along the other circle. Clearly, these two generators correspond
to the so-called $x$- and $y$-crossings of curves (see Fig. 1). An
$x$-crossing ``$a$'' occurs when a smooth curve $\gamma(t) = (\gamma_1(t),
\gamma_2(t))$ intersects a line $\gamma_1(t) = k$ ($k \in \BZ$) with
$\dot{\gamma_1}(t) > 0$, while a $y$-crossing ``$b$'' takes place when
$\gamma(t) = (\gamma_1(t), \gamma_2(t))$ intersects a line $\gamma_2(t) = k$
($k \in \BZ$) with $\dot{\gamma_2}(t) > 0$. The ``inverse crossings''
$a^{-1}$ and $b^{-1}$ occur when the corresponding derivatives are negative.
We may assume that all these crossings are transversal. More precisely, we may
restrict our studies to such curves.

Our general goal is to study the large scale behavior of ``admissible''
billiard orbit segments $\pi(x(t)) = \pi((x_1(t), x_2(t)))$, $0 \le t \le T$,
$\text{dist}(x(0), (0,0)) = r_0$, as $T \rightarrow \infty$. Here,
``admissibility'' is understood in the sense of \cite{BMS06}, which means the
following: Denote by $\textbf{k}_0, \textbf{k}_1, \ldots, \textbf{k}_n \in
\BZ^2$ the centers of the obstacles $\mathcal{O}_{\textbf{k}_i}$ at whose
boundaries the lifted orbit segment $x(t)$, $0 \le t \le T$, is reflected,
listed in time order.  Admissibility means that the following three conditions
are satisfied:
\begin{enumerate}
   \item $\textbf{k}_0 = (0,0)$,
   \item for any $1 \le i \le n$, only the obstacles
   $\mathcal{O}_{\textbf{k}_{i-1}}$ and $\mathcal{O}_{\textbf{k}_i}$ intersect the
   convex hull of these two obstacles,
   \item for any $1 \le i \le n-1$, the obstacle $\mathcal{O}_{\textbf{k}_i}$ is
   disjoint from the convex hull of $\mathcal{O}_{\textbf{k}_{i-1}}$ and
   $\mathcal{O}_{\textbf{k}_{i+1}}$.
\end{enumerate}

A crucial result of \cite{BMS06}, Theorem 2.2 claims the existence of orbits
with any prescribed (finite or infinite) admissible itinerary
$\left(\textbf{k}_n\right)_{n=N_1}^{N_2}$.

In this paper we always consider the obstacles to be closed, i.e., containing
their boundaries. Whenever dealing with the so called admissible orbits,
we shall restrict ourselves to studying only
\renewcommand{\labelenumi}{(\Alph{enumi})}
\begin{enumerate}
   \item special admissible billiard orbit segments, the so called strongly admissible
   orbit segments, for which the above discrete itinerary
   \begin{displaymath}
      (\textbf{k}_0, \textbf{k}_1, \ldots, \textbf{k}_n)
   \end{displaymath}
   has the additional property that
   $\text{dist}(\textbf{k}_{i-1}, \textbf{k}_i) \leq \sqrt{2}$ for $i = 1, 2, \ldots, n$.
\end{enumerate}
We are primarily interested in discovering the asymptotic behavior of the
above segments $\{\pi(x(t))| 0 \le t \le T\}$ from the viewpoint of the
fundamental group $\pi_1(\mathcal{Q})$, as $T \rightarrow \infty$.  The first
question that arises here is how to measure the large-scale motion in
$\pi_1(\mathcal{Q})$ that is naturally associated with $\{\pi(x(t))| 0 \le t
\le T\}$? In order to answer this question, we first consider the so-called
Cayley graph $\mathcal{G} = (\mathcal{V}, \mathcal{E})$ of the group
$\pi_1(\mathcal{Q}) = \textbf{F}_2$ determined by the symmetric system of
generators $\mathcal{A} = \{a, a^{-1}, b, b^{-1}\}$. The vertex set
$\mathcal{V}$ of the Cayley graph $\mathcal{G}$ is, by definition, the
underlying set of the group $\textbf{F}_2$. We say that an oriented edge of type
$l\in \mathcal{A}$ goes from the element $w_1 \in \mathcal{V}$ to the element
$w_2 \in \mathcal{V}$ if $w_1l = w_2$. The arising oriented graph consists of
pairs of oppositely oriented edges $l, l^{-1}$. Other than the these cycles of
length 2, there are no cycles in the Cayley graph $\mathcal{G}$. If we
identify the opposite edges, then, obviously, we obtain a tree in which every
vertex has degree 4 (a so-called 4-regular tree). The graph $\mathcal{G}$ is
considered a rooted tree with root $1\in\mathcal{V}$. (The identity element
$1$ of the group $F_2$.)

On the set $\mathcal{V} = \textbf{F}_2$ a natural way to measure the distance
$d(x,y)$ between two vertices $x, y$ is to use the graph distance,
i.e., the length of the shortest path (the only simple path)
connecting $x, y$. Two facts are immediately clear about this
distance $d(\cdot, \cdot)$:
\renewcommand{\labelenumi}{(\arabic{enumi})}
\begin{enumerate}
   \item $d(1, w) = \|w\|$ is the so-called length of the word $w$,
   i.e., the overall number of letters $l \in \mathcal{A}$ that are
   needed to express $w$ in its shortest form,
   \item the metric $d(\cdot, \cdot)$ is left-invariant (for the
   whole Cayley graph $\mathcal{G}$ is invariant under the left regular
   action of $\textbf{F}_2$ on $\mathcal{V} = \textbf{F}_2$).
\end{enumerate}
Secondly, the correct way to define the direction in which a trajectory in
$\mathcal{V}$ goes to infinity is to use the so-called ``ends'' of the
hyperbolic group $\textbf{F}_2$ (see \cite{CoP93}). An end of $\textbf{F}_2$ is an
infinite, simple (not self-intersecting) path, i.e., an infinite branch $W =
(w_0, w_1, w_2, \ldots)$ where $w_i \in \textbf{F}_2$, $w_0 = 1$, $l_k =
w^{-1}_{k-1}w_k \in \mathcal{A}$, $k = 1, 2, \ldots$, $w_k \neq w_l$ for $k
\neq l$, or, equivalently, $l_k^{-1} \neq l_{k+1}$ for all $k \in \BN$. The
set of all ends $\text{Ends}(\textbf{F}_2)$ of $\textbf{F}_2$ will be denoted
by $E$. The elements $W$ of $E$ (as above) are uniquely determined by the
infinite sequence $(l_1, l_2, \ldots) \in \mathcal{A}^{\BN}$, where $l_k^{-1}
\neq l_{k+1}$ for all $k \in \BN$. In this way the set $E =
\text{Ends}(\textbf{F}_2)$ is identified with a closed subset of the product
space $\mathcal{A}^{\BN}$ and inherits from $\mathcal{A}^{\BN}$ its natural
product space (a Cantor set) topology. The set $E=\text{Ends}(\textbf{F}_2)$
with this topology is also called the \textit{horizon}, or the
\textit{ideal boundary} of the group $\textbf{F}_2$.

The large-scale behavior of the projected orbit segment
\[
\{\pi(x(t)) = \pi(x_1(t), x_2(t)) | 0 \le t \le T\} \subset \mathcal{Q}
\]
will be discovered by understanding
\renewcommand{\labelenumi}{(\alph{enumi})}
\begin{enumerate}
   \item in what direction $\pi(x(T))$ goes to $\infty$, when
   $\pi(x(T))$ is appropriately interpreted as an element of
   $\textbf{F}_2 = \pi_1(\mathcal{Q})$,
   \item at what speed $\pi(x(T))$ goes to infinity in
   $\textbf{F}_2$, i.e., how fast the distance $d(1, \pi(x(T)))$
   tends to infinity as a function of $T$.
\end{enumerate}
The natural phase space that incorporates the data of both (a) and
(b) is the cone
\begin{eqnarray}\label{cone_eq}
   C &=& ([0, \infty) \times E)/(\{0\} \times E)
\end{eqnarray}
erected upon the base $E$ that can be obtained from the product space $[0,
\infty) \times E$ by pinching together all points of the form $(0, e)$, $e \in
E$. The cone $C$ is clearly an open and dense subset of the compact metrizable
cone $\overline{C}$, in which the half open time interval $[0, \infty)$ is
replaced by the compact interval $[0, \infty]$. This means that the topology
of the cone $C$ can be induced by some complete separable metric (cf.  Theorem
4.3.23 in \cite{Eng89}), thus $C$ is a so-called Polish space. We will not use any
such actual metric inducing the topology of $C$, but will only measure the
distances of points from the vertex $0$ of $C$ by using the parameter function
$t$.

It is obvious that a subset $X$ of $C$ is compact if and only if
$X$ is closed and bounded, where boundedness of $X$ means the
boundedness of the distance function $t$ on $X$.

\subsection*{The Homotopical Rotation Set $R \subset C$ and the Admissible Homotopical
Rotation Set $AR \subset C$}
As we stated above, we shall study the asymptotic homotopical behavior of the
billiard trajectory segments $\pi(x(t)) = \pi(x_1(t), x_2(t))$, $0 \le t \le
T$, $x(0) \in \partial\mathcal{O}_{(0,0)}$, i.e., $d(x(0), (0,0)) = r_0$,
$x(T) \in \partial\mathcal{O}_{\textbf{k}_n}$, as $T \rightarrow \infty$.
Denote by $0 = t_0 < t_1 < t_2 < \cdots < t_n = T$ the times when
$d(x(t), \BZ^2) = r_0$, and let $x(t_i) \in \partial\mathcal{O}_{\textbf{k}_i}$,
$i = 0, 1, \ldots, n$, $\textbf{k}_0 = (0,0)$.
With this orbit segment $(x(0), \dot{x}(0), n)$ we naturally
associate an element $w = w(x(0),\dot{x}(0), n)\in\textbf{F}_2$
of the fundamental group $\pi_1(\mathbb{T}^2\setminus\mathcal{O})=\textbf{F}_2$
in the following way: We record the times $0 < \tau_1 < \tau_2 < \cdots < \tau_k < T$
when at least one of the two coordinates $x_1(\tau), x_2(\tau)$ is an
integer.
   \footnote[2]{It follows from the transversality condition
   (imposed on the piecewise smooth curve $x(t)$, $0 \le t \le T$)
   that the set of points to be listed above is discrete and closed,
   hence finite. Thus, the above finite listing $\{\tau_1, \tau_2,
   \ldots, \tau_k\}$ can indeed be done. This restriction only discards
   horizontal and vertical periodic trajectories with period $2$, bouncing back
   and forth between two neighboring obstacles at unit distance from each other.
   All these periodic orbits are trivial: they stay bounded in the group
   $\textbf{F}_2$.} 
If $x_1(\tau_i) \in \BZ$ and $\varepsilon_i =\text{sgn}\left[\left(d/d\tau\right)\,x_1(\tau)|_{\tau=\tau_i}\right]$,
then we take $w_i = a^{\varepsilon_i}$, while for $x_2(\tau_i) \in \BZ$ 
and $\varepsilon_i = \text{sgn}\left[\left(d/d\tau\right)\,x_2(\tau)|_{\tau=\tau_i}\right]$ 
we take $w_i = b^{\varepsilon_i}$. The first crossing will be called an $x$-crossing
$a^{\varepsilon_i}$, while the second crossing will be called a $y$-crossing $b^{\varepsilon_i}$, see also Fig. 1. The word $w = w(x(0), \dot{x}(0), T)$ is then
defined as the product $w = w_1w_2\ldots w_k$. We can now make the following 
observation:
\begin{obs}
   The billiard orbit segment $(x(0), \dot{x}(0), n) =
   \{\pi(x(t))|0 \le t \le T\}$ can be made a closed curve (a loop) in
   $\mathbb{T}^2\setminus\mathcal{O}$ by adding to it a bounded
   extension (beyond $T$). This bounded addition will only modify the
   word $w = w_1w_2\ldots w_k = w(x(0), \dot{x}(0), n)$ (defined
   above) by a bounded right multiplier, but all modifications have
   no effect on the asymptotic behavior of $w$ as $T \rightarrow
   \infty$, see Lemma \ref{bounded_dif} below.
\end{obs}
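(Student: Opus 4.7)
The plan is to separate the observation into two independent pieces — the existence of a bounded closing extension, and its effect on the associated word as right-multiplication by an element of uniformly bounded length — and to note that the third ingredient, insensitivity of the escape direction and escape speed to such a bounded modification, is deferred by the observation itself to Lemma \ref{bounded_dif}.

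For the bounded extension, I would observe that both $\pi(x(0))$ and $\pi(x(T))$ lie on the same circle $\partial\mathcal{O} \subset \mathbb{T}^2$, of circumference $2\pi r_0 < \pi\sqrt{2}/2$. Any two points on this circle can be joined by an arc $\alpha \subset \partial\mathcal{O}$ of length at most $\pi r_0$; the concatenation of the projected orbit with $\alpha$ gives the desired closed loop $\gamma_T \subset \mathcal{Q}$, with $\alpha$'s length and location depending only on $r_0$, never on $T$.

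To identify the modification of $w$, I would enumerate the $x$- and $y$-crossings produced by $\alpha$. Since $\alpha$ lies inside the disk of radius $r_0 < \sqrt{2}/4$ around the integer point $(0,0)$, it transversally crosses the coordinate grid lines $\{x_1\in\BZ\}$ and $\{x_2\in\BZ\}$ in at most four points in total (two per axis), after the arbitrarily small generic perturbation needed to secure transversality. These crossings contribute a word $u\in\textbf{F}_2$ with $\|u\|\le 4$, and the total word for $\gamma_T$ is $w\cdot u$, as asserted.

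The main obstacle, as I see it, lies entirely in the third piece — the claim that right-multiplication by such a bounded $u$ preserves both the limit direction in $\text{Ends}(\textbf{F}_2)$ and the escape speed $\lim\|w\|/T$. The speed statement is immediate from $\bigl|\|wu\|-\|w\|\bigr|\le\|u\|\le 4$ in the tree metric of the Cayley graph. The direction statement uses the tree structure of the Cayley graph of $\textbf{F}_2$: the reduced forms of $w$ and $wu$ agree outside their last at most $\|u\|$ letters, so for any fixed prefix-length $k$ their $k$-th initial letters coincide once $\|w\|>k+\|u\|$, forcing convergence to the same end. This is exactly what Lemma \ref{bounded_dif} will formalize, so in the present observation it may simply be cited.
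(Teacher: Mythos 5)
Your proposal is correct and takes essentially the same approach as the paper, which states the observation without an explicit proof and defers the asymptotic claim to Lemma~\ref{bounded_dif}; the closing arc along $\partial\mathcal{O}$ together with the bound of at most four additional grid-line crossings supplies exactly the details the paper leaves implicit. (A small side remark: no generic perturbation is actually needed, since the circle of radius $r_0<1/2$ centered at a lattice point already meets each coordinate line transversally, the tangent at $(0,\pm r_0)$ being horizontal and at $(\pm r_0,0)$ vertical.)
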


\renewcommand{\labelenumi}{(\arabic{enumi})}
\begin{df}\label{lim_pt_df}
   Let $x_i = \{x_i(t)|0 \le t \le T_i \}$ (i =1, 2, 3,
   \ldots) be an infinite sequence of piecewise smooth continuous curves in
   $\mathbb{T}^2\setminus\mathcal{O}$ with all transversal $x$- and
   $y$-crossings and $\lim_{i \rightarrow \infty} T_i = \infty$. We
   say that the point $(t,e) \in C$ of the cone $C$ is the limiting
   point of the sequence $(x_i)_{i = 1}^{\infty}$ if
   \begin{enumerate}
      \item $w(x_i) \rightarrow e$, as $i \rightarrow \infty$, and
      \item $\lim_{i \rightarrow \infty} (1/T_i)\|w(x_i)\| = t$.
   \end{enumerate}
\end{df}

\begin{lm} \label{bounded_dif}
   Let $x_i = \{x_i(t)|0 \le t \le T_i \}$ and $y_i = \{y_i(t)|0 \le t \le
   \tilde{T}_i \}$ be two infinite sequences of piecewise smooth continuous curves
   fulfilling the conditions of Definition \ref{lim_pt_df}, in particular,
   with $\lim_{i \rightarrow \infty}T_i=\lim_{i \rightarrow \infty}\tilde{T}_i=\infty$.
   Assume that $x_i$ and $y_i$ differ only by a bounded terminal segment, i.e.,
   there exists a bound $K > 0$ such that $|T_i -\tilde{T}_i| \le K$ and $x_i(t)
   \neq y_i(t)$ imply $T_i - t \le K$.
   Finally, assume that $(t,e) \in C$ is the limiting point of the sequence
   $(x_i)_{i=1}^{\infty}$. Then $(t,e)$ is also the limiting point of the
   sequence $(y_i)_{i=1}^{\infty}$.
\end{lm}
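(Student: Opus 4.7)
The plan is to exploit that $x_i$ and $y_i$ agree outside a terminal segment of bounded time-length, so that the associated group elements $w(x_i), w(y_i) \in \textbf{F}_2$ share a long common prefix and differ only by a bounded right factor; this lets us transfer both the radial and the directional convergence from one sequence to the other.

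First I would set up the decomposition. The hypotheses $|T_i - \tilde{T}_i| \le K$ and $x_i(t) \neq y_i(t) \Rightarrow T_i - t \le K$ together guarantee that $x_i(t) = y_i(t)$ on a common initial segment $[0, t_i^*]$, where $t_i^* := \min(T_i,\tilde{T}_i) - K$ and both $T_i - t_i^*$ and $\tilde{T}_i - t_i^*$ are bounded by $2K$. Let $u_i \in \textbf{F}_2$ record the crossings produced by this common initial segment, and let $v_i, v'_i \in \textbf{F}_2$ record the crossings produced by $x_i$ on $[t_i^*, T_i]$ and by $y_i$ on $[t_i^*, \tilde{T}_i]$, so that
\begin{equation*}
   w(x_i) = u_i v_i, \qquad w(y_i) = u_i v'_i
\end{equation*}
as elements of $\textbf{F}_2$.

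Next I would bound the terminal contributions. Each differing piece sits in a time interval of length at most $2K$; since the curves of interest arise from unit-speed billiard flow (and in the use-case of the preceding Observation the loop-closing extension can be chosen to contribute only $O(1)$ letters), the number of transversal integer crossings on each such piece is uniformly bounded by some constant $M = M(K)$. Hence $\|v_i\|, \|v'_i\| \le M$, and by left-invariance of the word metric
\begin{equation*}
   d\bigl(w(x_i), w(y_i)\bigr) \;=\; \|v_i^{-1} v'_i\| \;\le\; 2M.
\end{equation*}
From this the two required limits follow. For the radial statement, $\bigl|\|w(x_i)\| - \|w(y_i)\|\bigr| \le 2M$ together with $|T_i - \tilde{T}_i| \le K$ and $T_i, \tilde{T}_i \to \infty$ yields $(1/\tilde{T}_i)\|w(y_i)\| \to t$ by a direct estimate. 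For the directional statement (nontrivial only when $t > 0$, in which case $\|w(x_i)\| \to \infty$) I would use that the Cayley graph of $\textbf{F}_2$ is a tree: the geodesics from $1$ to $w(x_i)$ and from $1$ to $w(y_i)$ coincide up to their branch point $b_i$, with $d(1, b_i) \ge \|w(x_i)\| - 2M$. For any fixed $N$, once $\|w(x_i)\| > N + 2M$ the first $N$ letters of the geodesic to $w(y_i)$ agree with those of the geodesic to $w(x_i)$, which by $w(x_i) \to e$ eventually agree with the first $N$ letters of $e$; hence $w(y_i) \to e$.

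The main obstacle is the uniform crossing bound $M$ in the second step, since the hypotheses of the lemma only cap the time-length of the differing terminal pieces and not, a priori, the combinatorial length of the corresponding words. For billiard trajectories this is immediate from unit speed and transversality of the integer crossings; in applications to bounded loop-closing modifications of the Observation one has the freedom to choose the modification so as to add only $O(1)$ letters. Once this combinatorial bound is secured, the remainder of the argument is purely formal tree geometry.
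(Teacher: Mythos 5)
Your proof takes essentially the same route as the paper's: the paper simply asserts that the boundedness hypothesis yields words $w_i$ with $w(y_i) = w(x_i)w_i$ and $\|w_i\| \le K_1$, and then reads off both the radial and the directional limits from this bounded right-factor relation. Your decomposition $w(x_i)=u_iv_i$, $w(y_i)=u_iv'_i$ is just a more explicit way of producing $w_i = v_i^{-1}v'_i$, and your tree-geometry/branch-point argument is a correct expansion of the step the paper leaves implicit (that bounded word-metric distance forces shared long prefixes of reduced words, hence convergence to the same end).

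The one substantive remark: you are right that the passage from ``bounded differing \emph{time} segment'' to ``bounded \emph{combinatorial} difference $\|w_i\|\le K_1$'' is not a consequence of the stated hypotheses for arbitrary piecewise smooth curves, since such a curve can in principle make unboundedly many transversal integer crossings in a fixed time window. The paper's proof relies on this same tacit assumption without comment; you at least surface it and note that it holds in the only two ways the lemma is actually used (unit-speed billiard segments, where the inner lemma of Theorem~\ref{upper_bnd} gives a time gap $\ge 1$ between consecutive like crossings, and the explicitly chosen bounded loop-closing extensions of the Observation, which add $O(1)$ letters). So this is not a gap in your argument relative to the paper's; if anything, it is a small improvement in transparency.
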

\begin{proof}
   Our boundedness hypothesis implies that there are
   words $w_i \in \textbf{F}_2$ and a constant $K_1$ such that
   \begin{eqnarray}
      w(y_i) = w(x_i)w_i \label{seq_mult} \\
      \|w_i\| \le K_1, \label{seq_bound}
   \end{eqnarray}
   for $i = 1, 2, 3, \ldots$. The assumed relation $w(x_i)
   \rightarrow e$ and \eqref{seq_mult}--\eqref{seq_bound}
   imply that $w(y_i) \rightarrow e$, as $i \rightarrow \infty$.
   Similarly, the sequences
   $(\|w(x_i)\| - \|w(y_i)\|)_{i = 1}^{\infty}$ and $(T_i -
   \tilde{T}_i)_{i=1}^{\infty}$ are bounded, hence the relation
\[
\lim_{i \rightarrow \infty} (1/T_i)\|w(x_i)\| = t
\]
implies
\[
\lim_{i \rightarrow \infty} (1/\tilde{T}_i)\|w(y_i)\| = t.
\]
\end{proof}

\begin{df}\label{admis_def}
   The homotopical rotation set $R \subset C$ is defined as all
   possible limiting points of sequences of orbit segments $x_i =
   \{x_i(t)| 0 \le t \le T_i\}$ with $T_i \rightarrow \infty$.
   Similarly, the admissible homotopical rotation set $AR \subset C$ is the set
   of all possible limiting points of sequences of admissible
   billiard orbit segments. It is clear that $AR \subset R$ and both
   are closed subsets of the cone $C$.
\end{df}

\begin{df}
   For a given forward orbit $x = \{x(t)| t \ge 0\}$ the homotopical rotation set
   $R(x)$ of $x$ is defined as the set of all possible limiting points $(t,e) \in
   C$ of sequences of orbit segments $x_i = \{x(t)| 0 \le t \le T_i \}$ (these
   are initial segments of $x$) with $\lim_{i \rightarrow \infty} T_i = \infty$.
   Plainly, $R(x)$ is a closed subset of the cone $C$.  Theorem \ref{upper_bnd}
   below will ensure that $R(x)$ is a non-empty, compact set. In the case $|R(x)| = 1$,
   i.e., when $R(x)$ is a singleton, the sole element of $R(x)$ will be called
   the homotopical rotation number of the forward orbit $x$.
\end{df}

\begin{rem}
   For the definition of admissible billiard orbits, please see the above
   definition in this section or the definition of admissibility immediately
   preceding Theorem 2.2 in \cite{BMS06}. Also, please compare the definition of $R$
   and $AR$ here with the analogous definitions at the beginning of section 3 of
   \cite{BMS06}.
\end{rem}

\begin{rem}
   We also note that any symbolic admissible itinerary
   $(k_0, k_1, \ldots)$ (finite or infinite) can actually be realized
   by a genuine billiard orbit. Please see Theorem 2.2 in \cite{BMS06}.
\end{rem}

The first result of this paper is a uniform upper bound for the radial size of
the full homotopical rotation set $R$.
\begin{thm}\label{upper_bnd}
   The homotopical rotation set $R$ is contained in the closed ball
   $B(0,\sqrt{2})$ centered at the vertex $0$ of the cone $C$ with radius
   $\sqrt{2}$. In particular, the set $R$ is compact.
\end{thm}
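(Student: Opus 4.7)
The plan is to bound the reduced word length $\|w(x(0),\dot x(0),T)\|$ in $\textbf{F}_2$ by $\sqrt{2}\,T$ up to a bounded error, for any billiard trajectory of time length $T$. The factor $\sqrt{2}$ originates from the pointwise inequality $|v_1(t)|+|v_2(t)|\le\sqrt{2}$ (a consequence of $v_1^2+v_2^2=1$, with equality only at $45^\circ$ to the axes).

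First I would identify $w$ with the walk on the $4$-valent Cayley tree of $\textbf{F}_2$ driven by the sequence of integer-line crossings of the lifted trajectory in $\BR^2\setminus\bigcup_{\textbf{k}\in\BZ^2}\mathcal{O}_{\textbf{k}}$: each $x$-crossing appends a letter $a^{\pm 1}$ and each $y$-crossing a letter $b^{\pm 1}$, and $\|w\|$ is the depth in the tree of the endpoint of the walk. Next, on each free flight of constant velocity $(v_1,v_2)$ and duration $\Delta t$, the number of $x$-crossings is at most $|v_1|\Delta t+1$ and the number of $y$-crossings is at most $|v_2|\Delta t+1$, the ``$+1$'' arising from the endpoints of the flight. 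Summing over the $n+1$ free flights that make up the trajectory,
\[
N_x+N_y \;\le\; \int_0^T(|v_1|+|v_2|)\,dt + 2(n+1)\;\le\;\sqrt{2}\,T + 2(n+1),
\]
and since the reduced length $\|w\|$ never exceeds the unreduced crossing count, $\|w\|\le \sqrt{2}\,T+2(n+1)$.

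The main obstacle is the boundary term $2(n+1)$. Because the obstacle radius satisfies $r_0<\sqrt{2}/4<1/2$, consecutive reflections occur on distinct obstacles whose centers are at Euclidean distance at least $1$ from each other, so the free-flight length is bounded below by $1-2r_0>0$ and $n$ can grow linearly in $T$. The naive estimate thus yields only $\|w\|\le CT$ with $C\gg\sqrt{2}$. The remedy, which is the heart of the argument, is to show that the ``extra'' crossings counted by the $+1$'s enter the unreduced word as adjacent inverse pairs and cancel during $\textbf{F}_2$-reduction. Since a reflection is confined to the $r_0$-neighborhood of a single lattice point $(k,m)$ and this neighborhood meets only the two integer lines $x=k$ and $y=m$, a direct case analysis of the incoming velocity, the reflection angle on the obstacle at $(k,m)$, and the outgoing velocity shows that whenever a flight produces an extra crossing of $x=k$ (resp.\ $y=m$) saturating its $+1$ correction, the adjacent flight across the reflection produces a crossing of the same line with the opposite orientation; these two letters appear consecutively in the unreduced word and reduce to the identity.

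After absorbing these cancellations, the contribution of the $2(n+1)$ overhead is replaced by an $r_0$-dependent constant $C_0$, yielding the uniform estimate $\|w\|\le \sqrt{2}\,T+C_0$. Dividing by $T_i$ and passing to the limit along any sequence $(x_i)_{i=1}^\infty$ realizing a limiting point $(t,e)\in R$ gives $t\le\sqrt{2}$. Therefore $R\subset B(0,\sqrt{2})$; since $R$ is closed in $C$ (Definition~\ref{admis_def}) and contained in this bounded ball, it is compact, completing the proof.
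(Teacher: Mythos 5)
Your opening moves are the same as the paper's: you want to bound the (reduced) word length by the total number of integer-line crossings, and to control that via $\int_0^T(|\dot x_1|+|\dot x_2|)\,dt\le\sqrt2\,T$. The paper also uses exactly this $\ell^1$-versus-$\ell^2$ inequality to produce the constant $\sqrt2$. The divergence is in how the crossing count is related to the integral, and your version has a real gap there.

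You decompose the trajectory into free flights and bound the $x$-crossings on each flight by $|v_1|\Delta t+1$, which accumulates a $2(n+1)$ overhead with $n$ growing linearly in $T$ (since each free flight has length at least $1-2r_0$). You correctly identify that this kills the estimate, and then assert that the overhead is absorbed because ``extra'' crossings at a reflection appear as adjacent inverse pairs $a\,a^{-1}$ (or $b\,b^{-1}$) that cancel in $\textbf{F}_2$. This is the step that does not hold up. First, after a reflection near $(k,m)$ the outgoing flight need not cross $x=k$ with opposite orientation at all: if the reflection does not reverse the sign of $\dot x_1$ (which it generally does not --- a single reflection off a small convex obstacle changes the velocity by a bounded angle), the outgoing flight proceeds to cross $x=k+1$ next, not $x=k$ again, so no inverse letter appears. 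Second, even when the outgoing flight does recross $x=k$, a crossing of $y=m$ can intervene between the two $x=k$ crossings, so the letters appear as $a\,b^{\pm1}\,a^{-1}$ and do not reduce. Third, ``saturating its $+1$ correction'' is not a well-defined local event that you can pair off across a single reflection; the $+1$'s are slack in an upper bound, not identifiable surplus letters. To make your route rigorous you would have to show, for every reflection, that the net overcount it introduces into the reduced word is $O(1)$ summed over the whole trajectory, and the case analysis you gesture at does not obviously close.

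The paper sidesteps all of this by bounding \emph{inter-crossing} intervals rather than flights: between two consecutive $x$-crossings, $\int_{\tau_1}^{\tau_2}|\dot x_1|\,dt\ge 1$. The proof of this small lemma is the real content. If the two crossings are of the same line $x=k$, the horizontal velocity must reverse sign, and since any collision with an obstacle centered on $x=k$ while the particle is in the half-strip $k<x_1<k+1/2$ can only \emph{increase} $\dot x_1$ (the outward normal there has positive first component), the sign reversal forces the particle past the median $x_1=k+1/2$ and back, yielding the integral $\ge 1$. Summing this over all inter-crossing gaps gives $N_x\le\int|\dot x_1|+1$ and $N_y\le\int|\dot x_2|+1$, so $N\le\sqrt2\,T+2$ with an additive constant $2$ independent of $n$, and $t\le\sqrt2$ follows at once. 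Replacing your per-flight decomposition and cancellation claim with this inter-crossing lemma is what is needed to make the proof work.
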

\begin{proof}
   Throughout this proof we will be dealing exclusively with orbit
   segments $x(t)=\left(x_1(t),x_2(t)\right)$ ($0\le t\le T$) lifted to the
   covering space
   \begin{displaymath}
      \tilde{\mathcal{Q}}
      =\left\{x\in\BR^2\big|\; \rm{dist}(x,\mathbb{Z}^2)\ge r_0\right\}
   \end{displaymath}
   of the configuration space $\mathcal{Q}$. The trivial, periodic orbits
   bouncing back and forth horizontally (vertically) between two neighboring
   obstacles (i. e. two obstacles with their centers at unit distance from each
   other) will be excluded from our considerations.

   First of all, we make a simple observation:
   \begin{lm}
      Let $\tau_1$ and $\tau_2$ ($0\le\tau_1<\tau_2\le T$)
      be the time moments of two consecutive $x$-crossings of the orbit segment
      $x(t)=\left(x_1(t),x_2(t)\right)$ ($0\le t\le T$). We claim that
      \begin{displaymath}
         \int_{\tau_1}^{\tau_2}\left|\dot{x}_1(t)\right|dt \ge 1.
      \end{displaymath}
   \end{lm}
   \begin{proof}
      Without loss of generality we may assume that
      $\dot{x}_1(\tau_1)>0$. Let $x_1(\tau_1)=k\in\BZ$. Then
      $x_1(\tau_2)=k+1$ or $x_1(\tau_2)=k$. In the former case we are done, so we
      assume that $x_1(\tau_1)=k=x_1(\tau_2)$. Clearly, in this case
      $\dot{x}_1(\tau_2)<0$. In order for the particle to change its positive
      horizontal momentum $\dot{x}_1(\tau_1)$ to the negative value of
      $\dot{x}_1(\tau_2)$, it is necessary for the particle to cross the median
      $x_1=k+1/2$ of the vertical strip $k\le x_1\le k+1$, for any collision
      on the left side of this strip can only increase the horizontal momentum. This
      observation yields the claimed lower estimate.
   \end{proof}

   \begin{rem}
      The counterpart of the lemma providing a similar lower estimate
      \begin{displaymath}
         \int_{\tau_1}^{\tau_2}\left|\dot{x}_2(t)\right|dt \ge 1
      \end{displaymath}
      between two consecutive $y$-crossings is also true, obviously.
   \end{rem}

   Denote by $N$ the overall number of $x$- and $y$-crossings (counted without
   the sign) on the considered orbit segment
   $\left\{x(t)=\left(x_1(t),x_2(t)\right)\big|\; 0\le t\le T\right\}$. The above
   lemma gives us the upper estimate
   \begin{displaymath}
      N \le \int_0^T
      \left(\left|\dot{x}_1(t)\right|+\left|\dot{x}_2(t)\right|\right)dt+2
   \end{displaymath}
   for the number $N$.
   Since $\left|\dot{x}_1(t)\right|+\left|\dot{x}_2(t)\right|\le\sqrt{2}$,
   we get that $N\le\sqrt{2}T+2$, that is, $N/T\le\sqrt{2}+2/T$, and this
   proves the theorem.
\end{proof}

\begin{exmp}\label{sharp_exmp}
   The upper bound $\sqrt{2}$ for the radial size of $R$ cannot be improved
   uniformly for all directions $e\in\rm{Ends}(\textbf{F}_2)$, as the following example
   shows: The ``smallness'' condition $r_0< \sqrt{2}/4$ precisely means that
   the corridor (strip)
   \begin{displaymath}
      S_0=\left\{x=(x_1,x_2)\in\BR^2\big|\; \sqrt{2}r_0\le x_2-x_1\le
      1-\sqrt{2}r_0\right\}
   \end{displaymath}
   is free of obstacles in the covering space
   \begin{displaymath}
      \tilde{\mathcal{Q}}
      =\left\{x\in\BR^2\big|\; \rm{dist}(x,\BZ^2)\ge r_0\right\}.
   \end{displaymath}
   In this corridor $S_0$, for any natural number $n$ we construct the periodic
   orbit (periodic after projecting it into $\mathcal{Q}$)
   \begin{displaymath}
      \left\{x^{(n)}(t)=\left(x^{(n)}_1(t),\, x^{(n)}_2(t)\right)\big|\; t\in\BR
      \right\}
   \end{displaymath}
   that has consecutive reflections at the points
   \begin{displaymath}
      \left(\dots,\,P_{-1},\,Q_{-1},\,P_0,\,Q_0,\,P_1,\,Q_1,\dots\right)
   \end{displaymath}
   (written in time order), where
   \begin{displaymath}
      \aligned
      P_k&=v_0+k(2n+1,\,2n+1), \\
      Q_k&=-v_0+(n,\,n+1)+k(2n+1,\,2n+1)
      \endaligned
   \end{displaymath}
   ($k\in\mathbb{Z}$) with $v_0=\left(-r_0/\sqrt{2},\,r_0/\sqrt{2}\right)$.
   The period length $T_n$ of of $x^{(n)}$ is
   \begin{displaymath}
      T_n=2\left||(n,\,n+1)-2v_0\right||=
      2\left(2n^2+2n+1+4r_0^2-2\sqrt{2}r_0\right)^{1/2}=2\sqrt{2}n+O(1),
   \end{displaymath}
   whereas this periodic orbit makes exactly $2n+1$ $x$-crossings $a$ and
   $2n+1$ $y$-crossings $b$ during one period. Thus, the word length
   \begin{displaymath}
      \left||w\left(\left\{x^{(n)}(t)\big|\; 0\le t\le T_n\right\}\right)\right||
   \end{displaymath}
   is equal to $4n+2$, therefore
   \begin{displaymath}
      \frac{\left||w\left(\left\{x^{(n)}(t)\big|\;
      0\le t\le T_n\right\}\right)\right||}
      {T_n}=\frac{4n+2}{2\sqrt{2}n+O(1)},
   \end{displaymath}
   and this quantity tends to $\sqrt{2}$, as $n\to\infty$.
\end{exmp}

The main result of this paper is an effective lower bound for the
set $AR$ and, consequently, for the full rotation set $R$:

\begin{thm}\label{lower_bnd}
   Assume that the radius $r_0$ of the sole obstacle is less than
   $\sqrt{5}/10$. We claim that the admissible rotation set $AR$ contains the
   closed ball $B(0,\,\sqrt{2}/2)\subset C$ of radius $\sqrt{2}/2$ centered at
   the vertex $0$ of the cone $C$.
\end{thm}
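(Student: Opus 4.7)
The goal is to show every point $(s,e) \in B(0,\sqrt{2}/2) \subset C$ is a limit of rotation numbers of admissible orbits. Since $AR$ is closed, it suffices to produce such limits for $(s,e)$ in a dense subset, say with $s$ rational and $e$ eventually periodic in $\text{Ends}(\mathbf{F}_2)$. The vertex $s=0$ is easy --- any admissible orbit whose word stays bounded qualifies --- so the substance lies in the case $0 < s \le \sqrt{2}/2$.

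The construction proceeds in two layers. First, we pass from genuine orbits to admissible symbolic itineraries $(\mathbf{k}_0, \mathbf{k}_1, \ldots)$ in $\BZ^2$ via the realization theorem (Theorem 2.2 of \cite{BMS06}). Second, we assemble itineraries from two kinds of building blocks. A \emph{productive block} is a single step $\mathbf{k}_{i-1}\to\mathbf{k}_i$ with $\|\mathbf{k}_i-\mathbf{k}_{i-1}\|\in\{1,\sqrt{2}\}$, oriented so that the crossings of the realized orbit contribute the next letter(s) of $e$ to the reduced word. A \emph{delay block} is a short detour whose word contribution cancels in $\mathbf{F}_2$ but consumes real time. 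By interleaving productive and delay blocks in the appropriate proportion, the ratio $\|w(x_i)\|/T_i$ can be tuned continuously throughout $(0,\sqrt{2}/2]$: for instance, horizontal productive blocks contribute one letter in time $\approx 1$ while delay blocks contribute nothing in time $\approx 2$, so a mixing ratio of $(\sqrt{2}-1)/2$ delay-per-productive realizes the extremal rate $\sqrt{2}/2$, and any smaller ratio yields rates strictly between $0$ and $\sqrt{2}/2$.

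Admissibility checking is the technical core. Conditions (1) and (2) are enforced by construction, so the critical condition is (3): the obstacle $\mathcal{O}_{\mathbf{k}_i}$ must be disjoint from the convex hull of $\mathcal{O}_{\mathbf{k}_{i-1}}\cup\mathcal{O}_{\mathbf{k}_{i+1}}$. For the ``bent'' triples that arise naturally in these blocks --- for example $((0,0),(1,1),(2,1))$, which chains a diagonal step onto a horizontal one --- the perpendicular distance from $\mathbf{k}_i=(1,1)$ to the segment $[(0,0),(2,1)]$ equals $1/\sqrt{5}$, so (3) reduces to $2r_0 < 1/\sqrt{5}$, which is exactly the hypothesis $r_0 < \sqrt{5}/10$. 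Thus the smallness condition of the theorem emerges directly from the geometry of the bent diagonals that are needed to realize arbitrary directions $e$.

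The main obstacle I anticipate is not any one of the above steps in isolation but the bookkeeping required to ensure that the \emph{realized} orbit's reduced word (not merely the combinatorial word predicted by the itinerary) has $l_1 l_2 \cdots l_K$ as its initial segment for arbitrarily large $K$, so that $w(x_i) \to e$ in $\text{Ends}(\mathbf{F}_2)$. Delay blocks in particular must be designed to be \emph{homotopically trivial} in $\mathcal{Q}$ rather than merely bounded in $\BZ^2$, since a detour that encircles a lattice point would inject non-cancelling letters and spoil the target end. The geometric slack afforded by $r_0 < \sqrt{5}/10$ is used to keep each delay loop confined to a region not enclosing any obstacle, after which Lemma \ref{bounded_dif} guarantees that these bounded modifications do not perturb the limit, yielding the desired rotation number $(s,e)\in AR$.
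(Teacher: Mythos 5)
Your overall framework is sound---pass to symbolic itineraries, invoke the realization theorem of \cite{BMS06}, enforce admissibility (you correctly locate the geometric origin of the $\sqrt{5}/10$ threshold in the $1/\sqrt{5}$ perpendicular distance of a bent triple), insert homotopically trivial detours to tune the rate down, and invoke Lemma~\ref{bounded_dif} to absorb bounded modifications. All of that matches the paper.

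However, there is a genuine gap in the quantitative heart of the argument, namely in the claim that ``horizontal productive blocks contribute one letter in time $\approx 1$'' and that a delay-to-productive ratio of $(\sqrt{2}-1)/2$ then yields the rate $\sqrt{2}/2$. Admissibility condition (3) forbids three consecutive collinear passage vectors, so a run of horizontal steps $(1,0),(1,0),\dots$ is not an admissible itinerary; to make many $a$-crossings in a row you are forced to zigzag. The paper's construction (and its key technical Lemma~\ref{cons_obs_lm}) uses interior passage vectors $\mathbf{l}_i=(1,(-1)^{i+1})$ of length $\sqrt{2}$, each of which yields exactly one $x$-crossing and no $y$-crossing because the zigzag stays inside a single horizontal strip. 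This is the mechanism that makes one reduced letter cost about $\sqrt{2}$ units of time, and it is what produces the rate $\sqrt{2}/2$ \emph{without} any delays at all. Your arithmetic presupposes a rate-$1$ productive block that does not exist; with it removed, the role of the delay blocks is only to decrease the rate below $\sqrt{2}/2$, not to shave a nominal rate $1$ down to $\sqrt{2}/2$. A second, related issue is that your proposal never verifies that the \emph{realized} orbit's crossings match the symbolic prediction---for instance, that a diagonal step does not pick up a spurious $y$-crossing, or that a horizontal connector contributes at most one---and that verification is precisely the content of Lemma~\ref{cons_obs_lm}. Without that accounting, there is no guarantee that the reduced word of $x^{(N)}$ equals $w_N$ and hence no control on $w(x^{(N)})\to e$.

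In short: keep your admissibility analysis and your delay/star-shape mechanism, but replace the horizontal productive blocks with the zigzag diagonal passages of length $\sqrt{2}$ and supply a crossing-counting lemma (like Lemma~\ref{cons_obs_lm}) certifying one crossing per diagonal step, none elsewhere except at connectors. The rate $\sqrt{2}/2$ then follows directly from $\|\mathbf{l}_j\|\le\sqrt{2}$, without any mixing-ratio calculation.
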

\begin{proof}
   The proof of this lemma will be subdivided into a few lemmas and
   observations. First of all, we observe

   \begin{obs}
      The imposed condition $r_0<\sqrt{5}/10$ is
      equivalent to requiring that the circular scatterer $\mathcal{O}_{(0,0)}$
      does not intersect the convex hull of the scatterers $\mathcal{O}_{(-1,-1)}$
      and $\mathcal{O}_{(0,1)}$. Therefore, under our condition of $r_0<\sqrt{5}/10$
      the following statements hold true:
      \renewcommand{\labelenumi}{(\roman{enumi})}
      \begin{enumerate}
         \item Every integer vector $\mathbf{k}\in\mathbb{Z}^2$ of length $1$
         or $\sqrt{2}$ is a vertex of the admissibility graph $G$ (please see the first paragraph after the proof of Lemma 2.5 in \cite{BMS06}), 
         i. e. the passage $\mathbf{k}$ is admissible;
         \item If $\mathbf{k}$ and $\mathbf{l}$ are two distinct integer vectors
         with norms $1$ or $\sqrt{2}$, then there is an oriented edge $\mathbf{k}\to\mathbf{l}$ in the admissibility graph $G$, that is, 
         in an admissible itinerary a passage $\mathbf{l}$ is permitted to follow a passage $\mathbf{k}$.
      \end{enumerate}
      \renewcommand{\labelenumi}{(\arabic{enumi})}
   \end{obs}

   The above statements are easily checked by an elementary inspection.

   At the core of the proof (of the theorem) is

   \begin{lm}\label{cons_obs_lm}
      Suppose that $n\ge 0$ is an integer and
      $\mathbf{k}_0,\mathbf{k}_1,\dots,\mathbf{k}_{n+2}$ ($\in\mathbb{Z}^2$)
      are centers of obstacles that are consecutively visited by the
      segment $S^{[0,T]}x$ of a strongly admissible orbit
      $S^{(-\infty,\infty)}x$, so that they are having the following
      properties:
      \begin{enumerate}
         \item The passage vectors $\mathbf{l}_i=\mathbf{k}_{i+1}-\mathbf{k}_i$ are
         equal to $(1,\,(-1)^{i+1})$ for $i=1,\dots,n$;
         \item The ``initial connector'' passage vector
         $\mathbf{l}_0=\mathbf{k}_1-\mathbf{k}_0$ is either $(1,0)$, or $(0,-1)$;
         \item The ``terminal connector''
         $\mathbf{l}_{n+1}=\mathbf{k}_{n+2}-\mathbf{k}_{n+1}$ is either $(1,0)$, or $(0,(-1)^n)$;
         \item If $\mathbf{l}_0=(0,-1)$, then the passage vector $\mathbf{l}_{-1}$
         (directly preceding $\mathbf{l}_0$ in the itinerary of $x$) has positive first coordinate and, if $\mathbf{l}_{n+1}=(0,(-1)^n)$, 
         then the passage vector $\mathbf{l}_{n+2}$ has positive first coordinate;
         \item $S^0x=x_0=x$ corresponds to the collision at
         $\mathcal{O}_{\mathbf{k}_0}$, while $x_T=S^Tx$ corresponds to the collision at $\mathcal{O}_{\mathbf{k}_{n+2}}$.
      \end{enumerate}
      (Note that, by admissibility, $\mathbf{l}_0=\mathbf{l}_{n+1}=(1,0)$ is not
      permitted in the case $n=0$.)

      We claim that the orbit segment $S^{[0,T]}x$ makes $n+1$ $x$-crossings ``$a$''
      and no $y$-crossings at all, with the only (possible) exception that the initial connector
      $\mathbf{l}_0=(1,0)$ (if it is $(1,0)$) may make a $y$-crossing $b$, just as the terminal connector
      $\mathbf{l}_{n+1}=(1,0)$ may make a $y$-crossing $b$ (when $n$ is odd), or 
      $\mathbf{l}_{n+1}=(1,0)$ may make a $y$-crossing $b^{-1}$ (when $n$ is even).
   \end{lm}

   \begin{proof} The lemma is proved by an elementary inspection, see also
      Figure 3 below.
   \end{proof}
   \begin{figure}[h]
      \centerline{
      \includegraphics[width=1.0\textwidth, height=0.3\textheight]{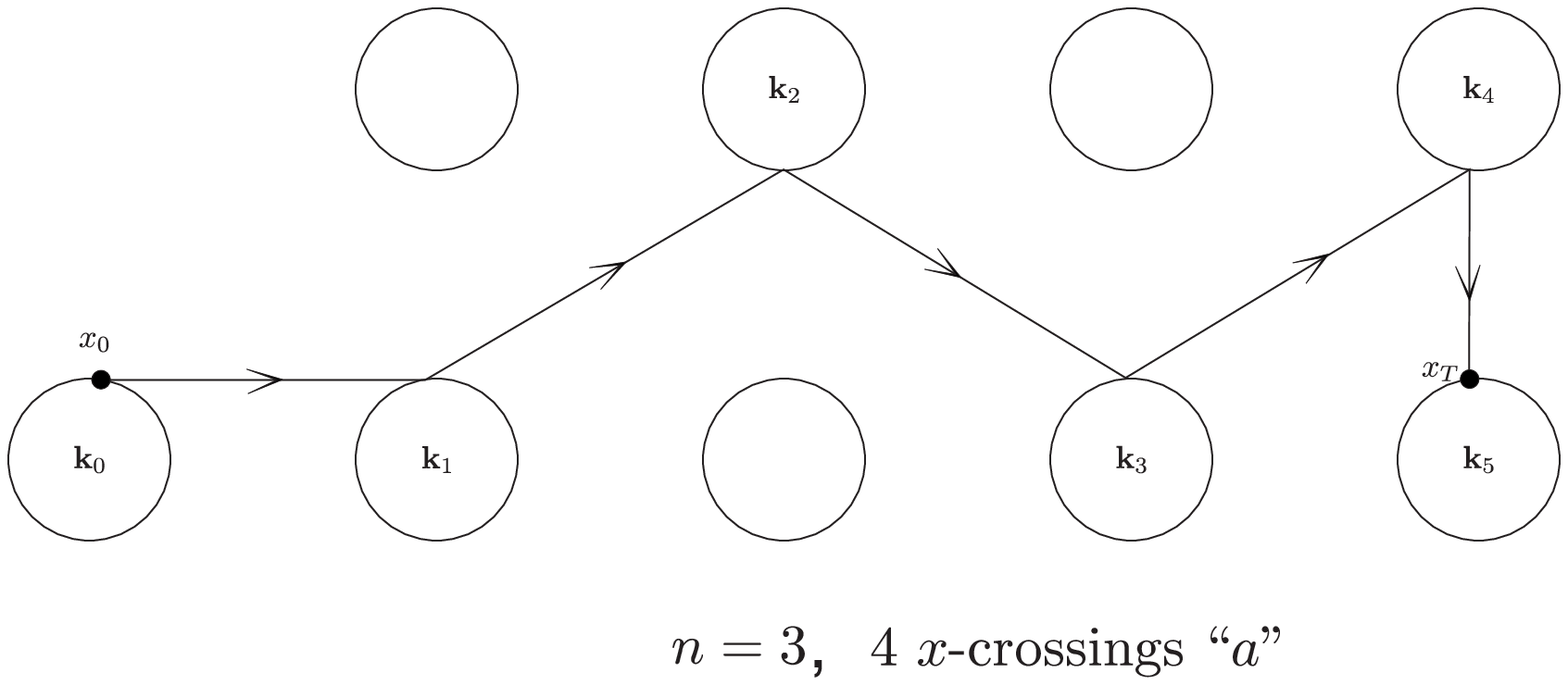}}
      \label{Figure 3}
      \caption{}
   \end{figure}

   \begin{rem}
      An admissible orbit segment $S^{[0,T]}x$ (described in the lemma above) will
      be called an ``$a^{n+1}$-passage'' with the connectors $\mathbf{l}_0$ and
      $\mathbf{l}_{n+1}$, where the first connector is called the ``initial
      connector'', while the latter one is called the ``terminal connector''. Observe
      that in this $a^{n+1}$-passage $S^{[0,T]}x$ the $x$-crossings ``$a$'' are in a
      natural, one-to-one correspondence with the reflections at the
      boundaries of $\mathcal{O}_{\mathbf{k}_1},\dots,\mathcal{O}_{\mathbf{k}_{n+1}}$, respectively.
      These reflections will be called the ``eigenreflections'' of the
      $a^{n+1}$-passage $S^{[0,T]}x$. The two reflections at the boundaries of
      $\mathcal{O}_{\mathbf{k}_0}$ and $\mathcal{O}_{\mathbf{k}_{n+2}}$ will not be
      considered as eigenreflections of this $a^{n+1}$-passage: The first one of
      them will actually be the last eigenreflection of a $b^m$-passage ($m\ne 0$)
      directly preceding the considered $a^{n+1}$-passage, while the second one
      will be the first eigenreflection of the $b^p$-passage ($p\ne 0$) directly
      following the $a^{n+1}$-passage $S^{[0,T]}x$. The shared passage vectors
      $\mathbf{l}_0$ and $\mathbf{l}_{n+1}$ will serve as connectors between the
      neighboring $a$- and $b$-passages. All passage vectors $\mathbf{l}_i$, used in
      this construction, have length $1$ or $\sqrt{2}$. We will say that the
      sequence of passage vectors
      $\sigma=(\mathbf{l}_0,\mathbf{l}_1,\dots,\mathbf{l}_{n+1})$ is the \textit{symbolic
      code} of the considered $a^{n+1}$-passage $S^{[0,T]}x$.
   \end{rem}

   \begin{rem}
      Clearly, similar statements are true on $a^m$-passages ($m<0$) and
      $b^m$-passages ($m\ne 0$). Also, in its current form of the lemma on
      $a^{n+1}$-passages, the first non-connector passage vector $\mathbf{l}_1=(1,1)$
      could have been $(1,-1)$, by appropriately reflecting all other passage
      vectors about the $x$-axis.
      \end{rem}

\begin{rem}
A few words are due here about the possible ``exceptional'' $b$ or $b^{-1}$ crossings of the initial
and/or terminal connectors, mentioned at the end of the claim of the lemma: If the initial
connector $\mathbf{l}_0=(1,0)$ happens to make an ``exceptional'' $y$-crossing $b$, then this crossing
will be counted as the last $y$-crossing of the $b^m$-passage ($m>0$) preceding the considered
$a^{n+1}$-passage. Similar statement can be said (mutatis mutandis) about a possible ``exceptional''
$y$-crossing ($b$ or $b^{-1}$) of the terminal connector $\mathbf{l}_{n+1}=(1,0)$.
\end{rem}

\begin{rem}
If $n>0$, then there are exactly $8$ different combinatorial possibilities for the symbolic code
$\sigma=(\mathbf{l}_0,\mathbf{l}_1,\dots,\mathbf{l}_{n+1})$ of an $a^{n+1}$-passage: The $x$ coordinates
of the connectors $\mathbf{l}_0$ and $\mathbf{l}_{n+1}$ can be $0$ or $1$ independently, whereas $\mathbf{l}_1$
can be $(1,1)$ or $(1,-1)$, also independently chosen from $\mathbf{l}_0$ and $\mathbf{l}_{n+1}$. However, for
$n=0$ there are only $6$ possibilities for $\sigma=(\mathbf{l}_0,\mathbf{l}_1)$:
\begin{enumerate}
\item $\mathbf{l}_0=(0,1)$, $\mathbf{l}_1=(0,-1)$;
\item $\mathbf{l}_0=(0,-1)$, $\mathbf{l}_1=(0,1)$;
\item $\mathbf{l}_0=(1,0)$, $\mathbf{l}_1=(0,1)$;
\item $\mathbf{l}_0=(1,0)$, $\mathbf{l}_1=(0,-1)$;
\item $\mathbf{l}_0=(0,1)$, $\mathbf{l}_1=(1,0)$;
\item $\mathbf{l}_0=(0,-1)$, $\mathbf{l}_1=(1,0)$.
\end{enumerate}
\end{rem}

   Consider an arbitrary element $w_\infty=\prod_{i=1}^\infty a^{n_i}b^{m_i}$
   (an infinite word) of the set $\rm{Ends}(F_2)$. For any natural number $N$ we
   want to construct a finite, admissible orbit segment $S^{[0,T_N]}x_N=x^{(N)}$,
   the associated word $w(x^{(N)})$ of which is
   $\prod_{i=1}^N a^{n_i}b^{m_i}:=w_N$, such that
   \begin{displaymath}
      \limsup_{N\to\infty}\frac{||w_N||}{T_N}\ge\frac{\sqrt{2}}{2}.
   \end{displaymath}
   By symmetry, we may assume that the considered word $w_\infty$ begins with a
   power of ``$a$'' (as the notations above indicate), and that $n_1>0$. We shall
   use Lemma \ref{cons_obs_lm} by successively concatenating the $a^{n_i}$- and
   $b^{m_i}$-passages ($i=1,\dots,N$) to obtain the admissible orbit segment
   $x^{(N)}=S^{[0,T_N]}x_N$ with the associated word
   \begin{displaymath}
      w(x^{(N)})=w_N=\prod_{i=1}^N a^{n_i}b^{m_i}.
   \end{displaymath}
   This will be achieved by constructing first the symbolic, admissible itinerary
   of $x^{(N)}$ containing only passage vectors
   $\mathbf{l}_j=\mathbf{k}_{j+1}-\mathbf{k}_j\in\mathbb{Z}^2$ of length $1$ and
   $\sqrt{2}$.

   For simplicity (and by symmetry) we assume that $n_1>0$.  First we
   construct the symbolic itinerary
   $(\mathbf{l}_0,\dots,\mathbf{l}_{n_1})$ of an $a^{n_1}$-passage by
   taking $\mathbf{l}_0=(1,0)$, $\mathbf{l}_j=(1,\,(-1)^{j+1})$ for
   $j=1,2,\dots,n_1-1$. The terminal connector $\mathbf{l}_{n_1}$ will
   be carefully chosen, depending on the parity of $n_1$ and the sign
   of the integer $m_1$. By symmetry we may assume that
   $\mathbf{l}_{n_1-1}=(1,\,-1)$, i. e. that $n_1$ is an odd number. In
   the construction of the terminal connector $\mathbf{l}_{n_1}$ and the
   symbolic itinerary
   $(\mathbf{l}_{n_1},\mathbf{l}_{n_1+1},\dots,\mathbf{l}_{n_1+|m_1|})$ of
   the subsequent $b^{m_1}$-passage we will distinguish between two,
   essentially different cases.

   \subsection*{Case I. $m_1>0$} In this case we take $\mathbf{l}_{n_1}=(0,1)$ and,
   furthermore, $\mathbf{l}_j=((-1)^{j},\,1)$ for
   $j=n_1+1,n_1+2,\dots,n_1+|m_1|-1$. The terminal connector $\mathbf{l}_{n_1+|m_1|}$
   of this $b^{m_1}$-passage will be carefully chosen by a coupling process
   (similar to the one that we are just describing here) to couple the
   $b^{m_1}$-passage with the subsequent $a^{n_2}$-passage.

   \subsection*{Case II. $m_1<0$} In this case we take $\mathbf{l}_{n_1}=(1,0)$,
   $\mathbf{l}_j=((-1)^{j+1},\,-1)$ for $j=n_1+1,n_1+2,\dots,n_1+|m_1|-1$. Again, the
   terminal connector $\mathbf{l}_{n_1+|m_1|}$ of this $b^{m_1}$-passage will be
   carefully chosen by a coupling process to couple the $b^{m_1}$-passage with
   the subsequent $a^{n_2}$-passage.

   It is clear that the above process can be continued (by changing whatever
   needs to be changed, according to the apparent mirror symmetries of the
   system) to couple together the subsequent $a^{n_1}$-, $b^{m_1}$-,
   $a^{n_2}$-, $b^{m_2}$-,$\dots$, $a^{n_N}$-, and $b^{m_N}$-passages. In this
   way we obtain the admissible symbolic itinerary
   $(\mathbf{l}_0,\mathbf{l}_1,\dots,\mathbf{l}_{||w_N||})$ of a potential admissible
   orbit segment $x^{(N)}$ with the associated word
   \begin{displaymath}
      w(x^{(N)})=w_N=\prod_{i=1}^N a^{n_i}b^{m_i}.
   \end{displaymath}
   The existence of such an admissible orbit segment $x^{(N)}$ is guaranteed by
   Theorem 2.2 of \cite{BMS06}, using an orbit length minimizing principle in the construction. This
   means that a required orbit segment $x^{(N)}$ can be obtained by minimizing
   the length of all piecewise linear curves (broken lines)
   $P_0P_1\dots P_{L+1}$ ($L=||w_N||=\sum_{i=1}^N(|n_i|+|m_i|)$) for which the
   corner points $P_j$ belong to the obstacle (the closed disk)
   $\mathcal{O}_{\mathbf{k}_j}$ ($j=0,1,\dots,L+1$) with
   $\mathbf{k}_j=\sum_{i=0}^{j-1}\mathbf{l}_i$. Clearly, the length $T_N$ of the
   arising orbit segment $x^{(N)}$ is less than the length of the broken line
   connecting the consecutive centers $\mathbf{k}_j$ ($0\le j\le L+1$) of the
   affected obstacles, and this latter number is
   $\sum_{j=0}^L||\mathbf{l}_j||\le\sqrt{2}(L+1)$. Thus, we get that
   \begin{displaymath}
      \frac{||w_N||}{T_N}>\frac{L}{\sqrt{2}(L+1)},
   \end{displaymath}
   and this proves Theorem \ref{lower_bnd}.

   We note that if a point $(t,e)$ turns out to be a limiting point of a sequence
   of admissible orbit segments with passage vectors of length $1$ or $\sqrt{2}$,
   then any other point $(t_1, e) \in C$ with $0 \le t_1 \le t$ is also such a
   limiting point. Indeed, by inserting the necessary amount of ``idle
   sequences'' $\alpha\alpha^{-1}\alpha\alpha^{-1}\cdots$ in the itinerary, we can decrease the
   ratios $\|w(\sigma)\|/T(\sigma)$ (and their limits) as we wish. This
   finishes the proof of the theorem.
\end{proof}

An immediate consequence of the last argument is

\begin{cor}
   The set $AR$ is star-shaped from the view point $(0,0)\in C$, i.e.,
   $(t,e) \in AR$ and $0 \le t_1 \le t$ imply that $(t_1, e) \in AR$.
\end{cor}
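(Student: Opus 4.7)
The plan is to formalize the \emph{idle insertion} argument sketched in the final paragraph of the proof of Theorem \ref{lower_bnd}. Fix $(t,e)\in AR$ and $0\le t_1\le t$; the borderline cases $t_1=t$ and $t_1=0$ need only minor separate treatment, so assume $0<t_1<t$. By definition there is a sequence $x_i=\{x_i(s):0\le s\le T_i\}$ of admissible billiard orbit segments with $T_i\to\infty$, $w(x_i)\to e$, and $\|w(x_i)\|/T_i\to t$. My goal is to produce an analogous sequence $y_i$ of admissible orbit segments witnessing $(t_1,e)$.

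I would construct $y_i$ by appending a long \emph{idle excursion} to $x_i$. Let $\mathbf{k}_{n_i}$ denote the center of the terminal obstacle of $x_i$, and pick a passage vector $\alpha\in\{(1,1),(1,-1),(-1,1),(-1,-1)\}$ of length $\sqrt{2}$, selected so that the junction triple (last passage vector of $x_i$, then $\alpha$, then $-\alpha$) satisfies the admissibility conditions. Attach to $x_i$ an orbit that bounces back and forth $N_i$ times between $\mathcal{O}_{\mathbf{k}_{n_i}}$ and $\mathcal{O}_{\mathbf{k}_{n_i}+\alpha}$ along the diagonal line segment connecting the two obstacle centers. Under the smallness hypothesis $r_0<\sqrt{2}/4$, the endpoints of each bounce have both coordinates strictly between consecutive integers, so the appended segment produces no $x$-crossings and no $y$-crossings at all, whence $w(y_i)=w(x_i)$ in $\mathbf{F}_2$, giving $w(y_i)\to e$ and $\|w(y_i)\|=\|w(x_i)\|$. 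Each back-and-forth bounce costs geometric time $c=2(\sqrt{2}-2r_0)$, so setting
\[
   N_i=\left\lfloor\frac{\|w(x_i)\|/t_1-T_i}{c}\right\rfloor
\]
(positive for $i$ large, since $\|w(x_i)\|/T_i\to t>t_1$) produces a total duration $\tilde T_i=T_i+cN_i$ with $\|w(y_i)\|/\tilde T_i\to t_1$. The existence of an actual billiard orbit realizing the augmented admissible symbolic itinerary is guaranteed by Theorem~2.2 of \cite{BMS06}.

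The main technical point requiring care is verifying admissibility of the spliced itinerary. The triples lying entirely within the idle excursion, namely $(\alpha,-\alpha,\alpha)$ and $(-\alpha,\alpha,-\alpha)$, trivially satisfy condition~(3) of admissibility since two of the three obstacles coincide. The junction triple involving the last passage vector of $x_i$ is handled by choosing $\alpha$ compatibly with that passage vector, invoking the obstacle-geometry facts recorded in the opening observation of the proof of Theorem~\ref{lower_bnd}, which precisely use the hypothesis $r_0<\sqrt{5}/10$ to guarantee disjointness between $\mathcal{O}_{\mathbf{k}_{n_i}}$ and the convex hull of the two neighbouring obstacles. The residual cases $t_1\in\{0,t\}$ follow immediately (for $t_1=0$ take $N_i$ growing much faster than $\|w(x_i)\|$), completing the proof.
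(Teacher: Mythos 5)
Your argument is essentially the same as the paper's: the authors treat the corollary as an immediate consequence of the ``idle insertion'' paragraph closing the proof of Theorem~\ref{lower_bnd}, and you are formalizing exactly that idea (splice in back-and-forth passages $\alpha\alpha^{-1}\alpha\alpha^{-1}\cdots$ to dilute time without changing the word). Your choice of a \emph{diagonal} $\alpha$ of length $\sqrt{2}$ is the right one: a coordinate passage vector of length $1$ would produce the degenerate perpendicular bouncing orbit excluded by the paper's transversality requirement, or else introduce spurious crossings, so the idle step really has to be diagonal, and under $r_0<\sqrt{2}/4$ the whole diagonal bounce segment stays strictly inside the unit square and contributes no $x$- or $y$-crossings. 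Your explicit construction of $N_i$ and the continuity-style argument for hitting the target ratio $t_1$ is also in the spirit of the paper, just written out.

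Two small points worth flagging. First, the opening observation of the proof of Theorem~\ref{lower_bnd} that you invoke only covers \emph{consecutive passage vectors of norm $1$ or $\sqrt{2}$}; the set $AR$ is defined via arbitrary admissible orbit segments, whose last passage vector $\mathbf{l}_{n_i-1}$ may have larger norm, so strictly speaking that observation does not by itself give you the junction admissibility. The fix is elementary (for any $\mathbf{v}=\mathbf{l}_{n_i-1}$ there is always a diagonal $\alpha$ not positively proportional to $\mathbf{v}$ for which the segment joining $-\mathbf{v}$ to $\alpha$ stays at distance $\ge 1 > 2r_0$ from the origin), but it is worth stating explicitly rather than deferring to an observation that does not quite cover the case. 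Second, writing $\tilde T_i = T_i + cN_i$ with $c=2(\sqrt{2}-2r_0)$ exactly presumes the spliced orbit agrees with $x_i$ on $[0,T_i]$ and that each idle bounce is precisely perpendicular; in fact the orbit realizing the augmented itinerary is obtained afresh from Theorem~2.2 of \cite{BMS06} by length minimization, so the agreement is only up to a bounded correction. This is harmless (it changes $\|w(y_i)\|/\tilde T_i$ by $o(1)$ and is exactly the sort of discrepancy Lemma~\ref{bounded_dif} is designed to absorb), but phrasing the $N_i$ choice as a limiting rather than an exact equality is cleaner.
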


The concluding result of this section shows that the lower estimate
$\sqrt{2}/2$ for the radial size of $AR$ is actually sharp, at least in some
directions $w_\infty\in\rm{Ends}(\textbf{F}_2)$ and in the \textit{small obstacle limit}
$r_0\to 0$.

\begin{prp}
   Consider the direction
   \begin{displaymath}
      w_\infty=aba^{-1}b^{-1}aba^{-1}b^{-1}\dots \in\rm{Ends}(\textbf{F}_2),
   \end{displaymath}
   i. e. the ``infinite power'' of the commutator element
   $[a,b]=aba^{-1}b^{-1}$. We claim that the radial size
   \begin{displaymath}
      \sigma=\sigma(r_0,w_\infty)=\sup\left\{t\in\mathbb{R}_+\big|\;
      (t,\,w_\infty)\in R\right\}
   \end{displaymath}
   of the full rotation set $R$ in the direction of $w_\infty$ has the limiting
   value $\sqrt{2}/2$, as $r_0\to 0$. In particular, similar statement holds true
   for the radial size of the smaller, admissible rotation set $AR$ in the same
   direction. Thus, the lower estimate $\sqrt{2}/2$ for the radial size of $AR$
   (of $R$) in this direction cannot be improved in the small obstacle limit
   $r_0\to 0$. We recall that, according to Theorem \ref{lower_bnd} above,
   $\sigma(r_0, e)\ge \sqrt{2}/2$ for all $r_0$ and all
   $e\in\rm{Ends}(\textbf{F}_2)$.
\end{prp}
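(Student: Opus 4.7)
The plan is to establish a matching upper bound $\sigma(r_0, w_\infty) \le \sqrt{2}/(2(1-r_0))$ in the commutator direction, which together with the universal lower bound $\sqrt{2}/2$ of Theorem \ref{lower_bnd} pinches $\sigma(r_0, w_\infty)$ to $\sqrt{2}/2$ as $r_0\to 0$. The crucial feature of the direction $w_\infty = (aba^{-1}b^{-1})^\infty$ is that consecutive $x$-crossings in the reduced word alternate in sign, and likewise for $y$-crossings. This alternation allows a substantial sharpening of the single-strip estimate behind the proof of Theorem \ref{upper_bnd}.

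The first step is a sharpened version of that lemma: if $\tau_1 < \tau_2$ are two consecutive $x$-crossings with $\dot{x}_1(\tau_1) > 0$ and $\dot{x}_1(\tau_2) < 0$ (so that $x_1(\tau_1) = x_1(\tau_2) = k$ and the orbit stays inside the strip $k\le x_1\le k+1$ on $(\tau_1,\tau_2)$), then $\int_{\tau_1}^{\tau_2}|\dot{x}_1(t)|\,dt \ge 2(1-r_0)$. The geometric reason: the collision inside the strip that flips $\dot{x}_1$ from $+$ to $-$ must occur at an obstacle whose center lies to the right of the impact point, and the only candidates in the strip are $\mathcal{O}_{(k,j)}$ and $\mathcal{O}_{(k+1,j)}$. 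The reflection law $\mathbf{v}'=\mathbf{v}-2(\mathbf{v}\cdot\mathbf{n})\mathbf{n}$ together with $\mathbf{v}\cdot\mathbf{n}<0$ at impact shows that a collision at $\mathcal{O}_{(k,j)}$ has $n_x\ge 0$ and can only increase $\dot{x}_1$; hence the flip must occur at some $\mathcal{O}_{(k+1,j)}$, with impact $x$-coordinate in $[k+1-r_0,\,k+1)$. Therefore $x_1^{\max}\ge k+1-r_0$ on $[\tau_1,\tau_2]$, and the total variation estimate yields the claim; the case $\dot{x}_1(\tau_1)<0$, $\dot{x}_1(\tau_2)>0$ is symmetric, with the impact at some $\mathcal{O}_{(k-1,j)}$.

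For the global step, suppose a sequence of orbit segments $x_i$ satisfies $w(x_i)\to w_\infty$ in $\text{Ends}(\textbf{F}_2)$ and $T_i\to\infty$, and write $\|w(x_i)\| = 4M_i + O(1)$. Applying the sharpened lemma to each opposite-sign adjacency in the original (un-reduced) $x$-crossing sequence of $x_i$, and using that the free-group reductions $aa^{-1},\,a^{-1}a\to\varepsilon$ can change the number of opposite-sign $x$-adjacencies only by $0$, $-1$, or $-2$ (so this count in the un-reduced sequence is at least its value $2M_i-O(1)$ in the reduced word $(aba^{-1}b^{-1})^{M_i}$ plus bounded tail), we obtain $\int_0^{T_i}|\dot{x}_1|\,dt \ge 2(1-r_0)(2M_i-O(1))$, and an identical bound for $\dot{x}_2$. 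Adding and using $|\dot{x}_1(t)|+|\dot{x}_2(t)|\le\sqrt{2}$ yields $\sqrt{2}\,T_i\ge 4(1-r_0)(2M_i-O(1))$, whence $\|w(x_i)\|/T_i\le \sqrt{2}/(2(1-r_0)) + o(1)$. Taking the supremum over such sequences and letting $r_0\to 0$ completes the proof.

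The principal technical hurdle is the geometric collision-location claim of the first step: one has to argue rigorously, from the reflection formula and the incoming-velocity condition $\mathbf{v}\cdot\mathbf{n}<0$, that every $\dot{x}_1$-flipping collision inside the strip crowds against the right wall $x=k+1$ to within distance $r_0$. The sign-change bookkeeping under $\textbf{F}_2$-reductions in the global step is elementary but should be stated explicitly, so that the bound applies to all sequences converging to $w_\infty$ rather than only to orbit segments that realize $(aba^{-1}b^{-1})^N$ without cancellation.
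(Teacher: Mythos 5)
Your proof is correct in its essentials and takes a genuinely different route from the paper's. The paper reduces to the case $w(x_n)=(aba^{-1}b^{-1})^{k_n}$, projects each orbit segment to the broken line $\Gamma_n$ through the visited obstacle centers (dropping the origin), and invokes an \emph{ad hoc} geometric lemma: any broken line with integer vertices that avoids the origin and winds around it $N$ times has length at least $4\sqrt{2}N$, with equality only for the diamond $(1,0),(0,1),(-1,0),(0,-1)$. Combined with the per-link estimate $|AB|\ge d-2r_0\ge d(1-2r_0)$ this yields $\sigma(r_0,w_\infty)\le\sqrt{2}/\bigl(2(1-2r_0)\bigr)$. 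You instead sharpen the single-strip lemma that already appears in the proof of Theorem \ref{upper_bnd}: when two consecutive $x$-crossings have opposite sign, the trajectory must collide with an obstacle on the far wall of the strip, so the return trip costs at least $2(1-r_0)$ in $\int|\dot{x}_1|\,dt$ rather than $1$; the reflection-law computation you give ($n_x\ge 0$ at $\mathcal{O}_{(k,j)}$ forces $v_x'\ge v_x$) correctly rules out a sign flip without touching a right-wall obstacle. Summing over opposite-sign $x$- and $y$-adjacencies and using $|\dot{x}_1|+|\dot{x}_2|\le\sqrt{2}$ gives $\sigma(r_0,w_\infty)\le\sqrt{2}/\bigl(2(1-r_0)\bigr)$, which is in fact slightly tighter than the paper's bound and avoids the unproven winding-number lemma entirely. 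What each buys: the paper's winding-number lemma is a ``global'' lower bound on path length and carries a crisp rigidity statement (the extremal diamond), while your estimate is local, self-contained, and makes transparent that the relevant feature of the commutator direction is the sign alternation of successive $a$- and $b$-letters.

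Two points deserve care. First, you sum $2(1-r_0)$ over opposite-sign adjacencies in the \emph{unreduced} time-ordered sequence of $x$-crossings; since the intervals $[\tau_j,\tau_{j+1}]$ are essentially disjoint the sum is legitimate, and your bookkeeping that each $aa^{-1}$ or $a^{-1}a$ cancellation decreases the opposite-sign count by at most $2$ is correct (note that $\textbf{F}_2$-reductions of $bb^{-1}$ pairs leave the $x$-projection untouched, so the $x$-projection of $w(x_i)$ really is obtained from the $x$-projection of the raw crossing word by elementary cancellations, as you implicitly use). Second, your step $\|w(x_i)\|=4M_i+O(1)$ is not forced by $w(x_i)\to w_\infty$ alone: convergence in $\mathrm{Ends}(\textbf{F}_2)$ controls only a growing common prefix, and the reduced words $w(x_i)$ may carry tails of unbounded length. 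Your argument, as written, only controls the prefix contribution $4M_i/T_i$. This is, however, exactly the same implicit normalization the paper makes when it assumes $w(x_n)=(aba^{-1}b^{-1})^{k_n}$ outright, so it is not a defect you introduced; a clean fix in either argument would be to show that one may pass to approximating segments whose reduced word is a finite prefix of $w_\infty$, or to bound the tail's length contribution by a separate application of the Theorem \ref{upper_bnd} estimate on the portion of the orbit past the prefix.
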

\begin{proof}
   Consider an infinite sequence $(x_n)_{n=1}^\infty$ of orbit segments
   \begin{displaymath}
      x_n=\left\{x_n(t)\big|\; 0\le t\le T_n\right\}
   \end{displaymath}
   with $T_n\nearrow\infty$, $w(x_n)=(aba^{-1}b^{-1})^{k_n}$,
   $k_n\nearrow\infty$, and
   \begin{displaymath}
      \lim_{n\to\infty}\frac{4k_n}{T_n}=\sigma(r_0,\, w_\infty).
   \end{displaymath}
   We may assume that the relevant $x$-crossings ``$a$'' of $x_n$ (relevant in the
   sense that their symbol $a$ remains in the associated word $w(x_n)$ after all
   possible shortenings) take place between the obstacles at $(0,0)$ and
   $(0,-1)$, the relevant $y$-crossings ``$b$'' occur between the obstacles at
   $(0,0)$ and $(1,0)$, the relevant $x$-crossings ``$a^{-1}$'' happen between the
   obstacles at $(0,0)$ and $(0,1)$ and, finally, the relevant $y$-crossings
   ``$b^{-1}$'' take place between the obstacles at $(0,0)$ and $(-1,0)$, i. e.
   $x_n$ circles around the central obstacle $\mathcal{O}_{(0,0)}$
   counterclockwise. The proof of the inequality
   \begin{displaymath}
      \limsup_{r_0\to 0} \sigma(r_0,\, w_\infty)\le\frac{\sqrt{2}}{2}
   \end{displaymath}
   will be based on the following, elementary geometric observation:

   \begin{lm}
      Let $N$ be a natural number and
      \begin{displaymath}
         \Gamma=\left\{\gamma(t)\big|\; 0\le t\le T\right\}
      \end{displaymath}
      be a piecewise linear curve (a broken line) in $\BR^2$ parametrized
      with the arc length, enjoying the following properties:
      \begin{enumerate}
         \item every vertex (corner) of $\Gamma$ is an integer point;
         \item $(0,0)\not\in\Gamma$;
         \item $\Gamma$ winds around the origin at least $N$ times, i. e.
         \begin{displaymath}
            \int_0^T \dot{\omega}(t)dt\ge 2\pi N,
         \end{displaymath}
         where $\omega(t)$ is the angular polar coordinate of $\gamma(t)$.
      \end{enumerate}
      We claim that $T\ge 4\sqrt{2}N$, and the equation holds if and only if
      $\Gamma$ connects the lattice points $(1,0)$, $(0,1)$, $(-1,0)$, and $(0,-1)$
      in this cyclic order.
   \end{lm}

   Since the proof of this result is a simple, elementary geometric argument
   (though with a little bit tedious investigation of a few cases), we omit it,
   and immediately turn to the proof of the proposition.

   For $n=1,2,\dots$ we define a broken line $\Gamma_n$, fulfilling all
   conditions of the previous lemma with $N=k_n$, by
   \renewcommand{\labelenumi}{(\alph{enumi})}
   \begin{enumerate}
      \item considering all centers $c_1,c_2,\dots,c_m$ ($c_i\in\mathbb{Z}^2$)
      of the obstacles visited by $x_n$ in the time order $x_n$ visits them;
      \item dropping the possible appearances of the origin from the above sequence
      $c_1,c_2,\dots,c_m$;
      \item constructing $\Gamma_n$ by connecting the lattice points
      $c_1,c_2,\dots,c_m$ (in this order) and, by adding a bounded extension to
      $\Gamma_n$ if necessary, ensuring that $\Gamma_n$ winds around the origin at
      least $k_n$ times.
   \end{enumerate}
   \renewcommand{\labelenumi}{(\arabic{enumi})}

   Observe that the length $|AB|$ of any billiard orbit segment, connecting two
   consecutive collisions, is always between $d-2r_0$ and $d$, where $d$ is the
   distance between the centers of the obstacles affected by the
   collisions. Therefore, by the previous lemma we get the following inequality
   for the length $T_n$ of $x_n$:
   \begin{displaymath}
      \frac{T_n}{1-2r_0}+C\ge 4\sqrt{2}k_n
   \end{displaymath}
   with some constant $C>0$. This inequality implies
   \begin{displaymath}
      \sigma(r_0,\, w_\infty)=\lim_{n\to\infty}\frac{4k_n}{T_n}\le
      \frac{\sqrt{2}}{2(1-2r_0)},
   \end{displaymath}
   thus
   \begin{displaymath}
      \limsup_{r_0\to 0}\sigma(r_0,w_\infty)\le\frac{\sqrt{2}}{2},
   \end{displaymath}
   as claimed by the proposition.
\end{proof}

\section{Corollaries and Concluding Remarks}\label{conc_sec}

The first corollary listed in this section is a byproduct of the proof of
Theorem \ref{upper_bnd}. It provides a positive constant as the upper estimate for the
topological entropy $h_{\text{top}}(r_0)$ of our considered $2D$ billiard flow
with one obstacle.

\begin{thm}\label{htop_thm}
   For the topological entropy $h_{\text{top}}(r_0)$ of the billiard
   flow studied in this paper we have the following upper estimate
   \begin{displaymath}
      h_{\text{top}}(r_0)\le 6\sqrt{2}\ln2=5.8815488\dots
   \end{displaymath}
\end{thm}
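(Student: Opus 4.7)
The plan is to deduce the entropy bound from the crossing-count estimate $N \le \sqrt{2}T + 2$ established during the proof of Theorem \ref{upper_bnd}. The strategy is to encode each orbit segment by a finite symbolic word whose length grows linearly in $T$ at rate $\sqrt{2}$, to count the admissible enriched words by a combinatorial branching argument, and then to translate this count into a bound on the topological entropy via the $(T,\varepsilon)$-separated set characterization of $h_{\text{top}}$.

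First, I would set up the symbolic coding. The core data is the sequence of signed crossings with letters in $\{a, a^{-1}, b, b^{-1}\}$, whose length is bounded by $\sqrt{2}T + 2$ by Theorem \ref{upper_bnd}. To recover the orbit up to a small perturbation, this coding must be augmented by a uniformly bounded amount of finer information at each step: which unit cell is entered, which obstacle (if any) is struck, and the combinatorial type of the reflection. A careful enumeration of these local alternatives leads to an effective branching factor of at most $64$ per crossing symbol, giving in total at most $64^{\sqrt{2}T + O(1)} = \exp\bigl(6\sqrt{2}T \ln 2 + O(1)\bigr)$ admissible codings.

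Next, I would invoke the dispersive nature of the billiard flow to argue that two orbit segments carrying distinct enriched codings must be $(T,\varepsilon)$-separated for a suitable $\varepsilon > 0$ fixed in terms of the coding granularity. Taking logarithms, dividing by $T$, and passing to the limit $T \to \infty$ then yields
\begin{displaymath}
   h_{\text{top}}(r_0) \le \limsup_{T \to \infty} \frac{1}{T} \ln \bigl(64^{\sqrt{2}T + O(1)}\bigr) = \sqrt{2} \ln 64 = 6\sqrt{2}\ln 2.
\end{displaymath}

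The hard part will be justifying the branching constant of $64$ precisely: the local analysis between two consecutive $x$- or $y$-crossings must be carried out carefully, so that the six bits per crossing suffice to record the position of the orbit modulo $(T,\varepsilon)$-equivalence. A secondary issue is confirming that distinct enriched codings truly give rise to $(T,\varepsilon)$-separated orbits, which rests on the uniform hyperbolicity of the dispersive billiard but should be checked explicitly in the small-obstacle regime $r_0 < \sqrt{2}/4$.
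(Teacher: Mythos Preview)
Your outline is pointed in the right direction --- bound a symbolic complexity linearly in $T$ using the integral estimate from Theorem~\ref{upper_bnd}, then convert to an entropy bound --- and this is indeed what the paper does. But the two steps you yourself flag as ``hard'' are genuine gaps in your plan, and the paper's argument is organized precisely so as to avoid them.

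The paper does \emph{not} enrich the crossing word and then argue about $(T,\varepsilon)$-separation. Instead it builds an explicit five-piece partition $\Pi$ of the configuration space: four thin strips $\mathcal{D}_2^\pm,\mathcal{D}_3^\pm$ of width $\varepsilon_0$ along the integer lines, and their complement $\mathcal{D}_1$. The first point is that $\Pi$ is \emph{generating} for the flow, so $h_{\text{top}}=\lim_{T\to\infty}\frac{1}{T}\ln N(T)$ with $N(T)$ the number of $\Pi$-itineraries of length $T$; this replaces your separated-set argument with a standard fact. The second point is that the branching is read off directly from the geometry: an orbit alternates between $\mathcal{D}_1$ and one of the four strips, with $4$ exit sides from $\mathcal{D}_1$ and $2$ from each strip, giving at most $8$ choices per cycle. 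The same integral estimate as in Theorem~\ref{upper_bnd}, applied now to each of the four strips separately, bounds the number of cycles by $\frac{2\sqrt{2}T}{1-\varepsilon_0}+O(1)$. Hence $N(T)\le 8^{2\sqrt{2}T/(1-\varepsilon_0)+O(1)}$, and letting $T\to\infty$ then $\varepsilon_0\to 0$ gives $h_{\text{top}}\le 2\sqrt{2}\cdot 3\ln 2 = 6\sqrt{2}\ln 2$.

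In short: your constant $64=2^6$ at rate $\sqrt{2}$ matches the paper's $8=2^3$ at rate $2\sqrt{2}$ numerically, but in your version the $64$ is asserted rather than derived, whereas in the paper both the rate and the branching factor fall out of an explicit partition. If you want to complete your version you would essentially have to rediscover that partition.
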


\begin{rem}
   The above corollary should be compared to (and explained in the
   framework of) some earlier results by Burago-Ferleger-Kononenko. In \cite{BFK98},
   the authors also enumerate all possible homotopical-combinatorial types of
   trajectories, and they prove the existence of a limit
   \begin{displaymath}
      0 < \lim_{r_0 \rightarrow 0} h_{\text{top}} (r_0) = c_0 < \infty.
   \end{displaymath}
   along with the lower estimate $\ln 3 \le c_0$ and an implicit upper bound in
   terms of the similar entropy limit for the $3D$ Lorentz gas. In Theorem \ref{htop_thm} we obtained a concrete upper bound for $c_0$.
\end{rem}

\begin{proof}[Proof of \ref{htop_thm}.]
   We subdivide the periodic billiard table $\mathcal{Q}$ (the configuration space)
   into five pairwise disjoint domains $\mathcal{D}_1, \mathcal{D}_2^\pm,
   \mathcal{D}_3^\pm$ with piecewise linear boundaries as depicted in the figure below.
   The domains $\mathcal{D}_k^+$ ($k=2,3$) consist of all points $(x_1, x_2) \in \mathcal{Q}$ for which the fractional part $\{x_{k-1}\}$ of $x_{k-1}$ satisfies
   the inequality $\{x_{k-1}\}\le\varepsilon_0$ (for some fixed, small $\varepsilon_0
   > 0$), the domains $\mathcal{D}_k^-$ ($k=2,3$) consist of all points $(x_1,
   x_2) \in \mathcal{Q}$ for which $\{x_{k-1}\}\ge 1-\varepsilon_0$, while
   $\mathcal{D}_1$ is the closure $\overline{\mathcal{Q}\setminus(\mathcal{D}_2^-\cup\mathcal{D}_2^+\cup
  \mathcal{D}_3^-\cup\mathcal{D}_3^+)}$ of the set $\mathcal{Q}\setminus(\mathcal{D}_2^-\cup\mathcal{D}_2^+\cup
   \mathcal{D}_3^-\cup\mathcal{D}_3^+)$.
   \begin{figure}[h]
      \centerline{
      \includegraphics[width=0.4\textwidth, height=0.4\textheight]{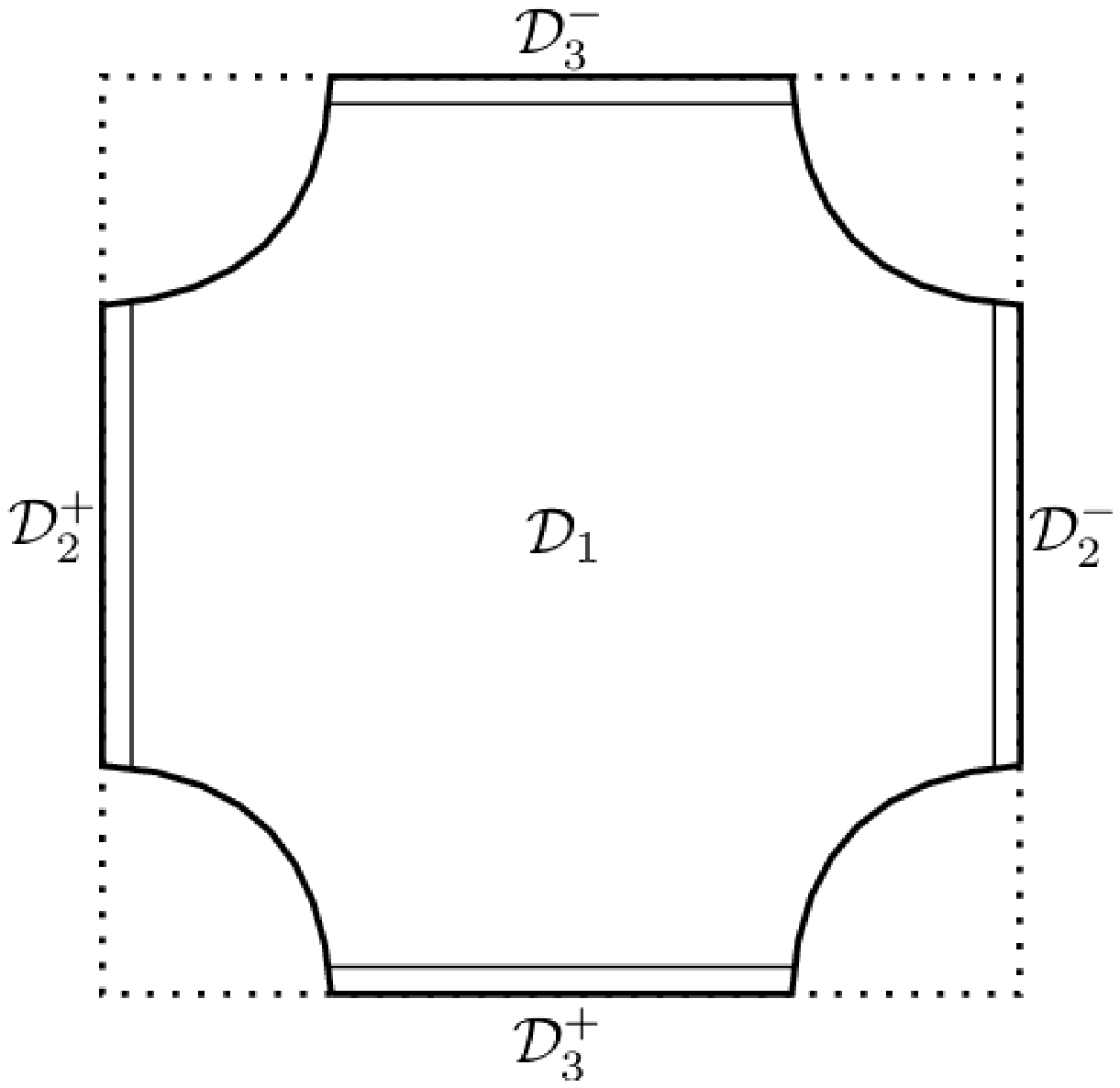}}
      \label{Figure 4}
      \caption{}
   \end{figure}
   The union $\mathcal{Q} =\mathcal{D}_1\cup\mathcal{D}_2^-\cup\mathcal{D}_2^+\cup
   \mathcal{D}_3^-\cup\mathcal{D}_3^+$ is an almost disjoint one: these domains
   only intersect at their piecewise linear boundaries. Thus, from the dynamical
   viewpoint $\mathcal{Q}=\mathcal{D}_1\cup\mathcal{D}_2^-\cup\mathcal{D}_2^+\cup
   \mathcal{D}_3^-\cup\mathcal{D}_3^+$ is a partition $\Pi$.

   We claim that $\Pi$ is a generating partition, meaning that the supremum (the
   coarsest common refinement) $\bigwedge_{n=-\infty}^{\infty}S^{-n\epsilon_0}(\Pi)$ of the
   partitions $S^{-n\epsilon_0}(\Pi)$ is the trivial partition into the singletons, modulo
   the zeroe-measured sets.  Indeed, if two phase points $x = (q_1, v_1)$ and $y
   = (q_2, v_2)$ ($q_1, q_2 \in \mathcal{Q}$, $v_i \in \BR^2$, $\|v_i\| = 1$)
   share the same symbolic future itineraries (recorded at $n\epsilon_0$ moments of time)
   with respect to the partition
   $\Pi$, then, as an elementary inspection shows, $S^{\tau}y$ and $x$ belong to
   the same local stable curve, where $\tau \in \BR$ is a time-synchronizing
   constant. Similar results apply to the shared symbolic itineraries in the past
   and the unstable curves. These facts imply that $x = S^{\tau}y$ (with some
   $\tau \in \BR$), whenever $x$ and $y$ share identical $\Pi$-itineraries in
   both time directions, i.e. $x=y$ for a typical pair $(x,y)$, so $\Pi$ is a generating 
   partition.

   For any time $T > 0$ ($T$ will eventually go to infinity) denote
   by $N(T)$ the number of all possible $\Pi$-itineraries of
   trajectory segments $S^{[0,T]}x$, $x \in \mathcal{M}$. It follows from
   the generating property of $\Pi$ that
   \begin{eqnarray}\label{htop_lim}
      h_{\text{top}} = \lim_{T \rightarrow \infty} \frac{1}{T}\ln N(T).
   \end{eqnarray}
   It is clear that any orbit segment $\left\{x(t)| 0\le t\le T\right\}$
   alternates between the domains $\mathcal{D}_1$ and
   $\mathcal{D}^*=\overline{\mathcal{Q}\setminus\mathcal{D}_1}$.
   Consider an orbit segment $x=\left\{x(t)\big|\; 0\le t\le T\right\}$ lifted to
   the covering space $\tilde{\mathcal{Q}}$ of $\mathcal{Q}$. Let $\tau_1$ be a
   time when $x$ leaves the domain $D_2^+$ ($D_2^-$), and $\tau_2$ be the
   time when $x$ re-enters the same domain $D_2^+$ ($D_2^-$) the next time,
   $0\le\tau_1<\tau_2\le T$. The proof of the lemma following Theorem \ref{upper_bnd}
   shows that
   \begin{displaymath}
      \int_{\tau_1}^{\tau_2}\left|\dot{x}_1(t)\right|dt \ge 1-\varepsilon_0.
   \end{displaymath}
   Therefore, the number of times the orbit segment $x$ visits the domain
   $D_2^+$ ($D_2^-$) is at most
   \begin{displaymath}
      \frac{1}{1-\varepsilon_0}\cdot\int_0^T \left|\dot{x}_1(t)\right|dt+1.
   \end{displaymath}
   Applying this upper estimate to $D_2^+$ and $D_2^-$, then the analogous upper
   estimates for the number of visits to $D_3^\pm$, and, finally, taking the sum
   of the arising four estimates, we get that the total number of visits by $x$
   to the four domains $D_2^\pm$, $D_3^\pm$ is at most
   \begin{displaymath}
      \frac{2}{1-\varepsilon_0}\cdot\int_0^T \left(\left|\dot{x}_1(t)\right|+
      \left|\dot{x}_2(t)\right|\right)dt+4\le\frac{2\sqrt{2}T}{1-\varepsilon_0}+4.
   \end{displaymath}
   Since $x$ alternates between $D_1$ and the union of the other four domains,
   the total number of times $x$ visits $D_1$ is at most
   \begin{displaymath}
      f(T,\,\varepsilon_0):=\frac{2\sqrt{2}T}{1-\varepsilon_0}+5.
   \end{displaymath}
   After entering any of the domains $\mathcal{D}_2^\pm$, $\mathcal{D}_3^\pm$,
   the orbit segment $\sigma$ has two sides of this domain (i. e. two combinatorial
   possibilities) to exit it, whereas, after entering the domain $\mathcal{D}_1$,
   it has four sides of $\mathcal{D}_1$ to leave it. This argument immediately yields
   the upper estimate
   \begin{eqnarray}\label{NT_bnd}
      N(T) \le 8^{f(T,\,\varepsilon_0)}
   \end{eqnarray}
   for the number $N(T)$ of all possible symbolic types of orbit segments of
   length $T$. In light of \eqref{htop_lim}, the above inequality proves the upper
   estimate of Theorem \ref{htop_thm}, once we take the natural logarithm of
   \eqref{NT_bnd}, take the limit as $T\to\infty$, and, finally, the limit as
   $\varepsilon_0\to0$.
\end{proof}

\begin{cor}[Corollary of Theorem \ref{lower_bnd}]
For the topological entropy $h_{\text{top}}(r_0)$ of the billiard flow (with one circular
obstacle of radius $r_0$ inside $\mathbb{T}^2$) we have the lower estimate
\[
h_{\text{top}}(r_0) \geq \frac{\ln 3}{\sqrt{2}} \approx 0.776836199\dots
\]
\end{cor}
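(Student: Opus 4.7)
The plan is to extract the lower bound directly from the construction used in the proof of Theorem \ref{lower_bnd}, by counting distinct reduced words in $\textbf{F}_2$ that can be realized as $w(x)$ for an admissible orbit segment of controlled length. First I would note that, since the Cayley graph of $\textbf{F}_2$ is the $4$-regular tree, the number of reduced words of length exactly $n$ equals $4 \cdot 3^{n-1}$. For each such word $w$, the construction in the proof of Theorem \ref{lower_bnd} produces an admissible symbolic itinerary with passage vectors of norm $1$ or $\sqrt{2}$ whose associated word is $w$; invoking Theorem 2.2 of \cite{BMS06} together with the orbit-length-minimizing principle used there realizes this itinerary by a genuine admissible orbit segment $x^{(w)} = S^{[0,T_w]}x_w$ whose length satisfies
\begin{displaymath}
T_w \;\leq\; \sqrt{2}\bigl(\|w\|+1\bigr) \;=\; \sqrt{2}(n+1),
\end{displaymath}
exactly as in the length estimate at the end of the proof of Theorem \ref{lower_bnd}.

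To convert this word count into an entropy bound, I would reuse the generating partition $\Pi$ introduced in the proof of Theorem \ref{htop_thm}. The crossings of the boundary strips $\mathcal{D}_2^\pm$ and $\mathcal{D}_3^\pm$ faithfully record the $x$- and $y$-crossings (together with their signs) that define $w(x(t))$, so two orbit segments realizing distinct reduced words $w \neq w'$ of length $n$ necessarily produce distinct $\Pi$-itineraries on $[0,T_w]$. Setting $T_n := \sqrt{2}(n+1)$, this yields $N(T_n) \geq 4 \cdot 3^{n-1}$, where $N(T)$ is the counting function from \eqref{htop_lim}. Combining,
\begin{displaymath}
h_{\text{top}}(r_0) \;=\; \lim_{T \to \infty}\frac{1}{T}\ln N(T) \;\geq\; \liminf_{n \to \infty}\frac{\ln\bigl(4 \cdot 3^{n-1}\bigr)}{\sqrt{2}(n+1)} \;=\; \frac{\ln 3}{\sqrt{2}},
\end{displaymath}
as claimed.

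The main technical obstacle will be the injectivity step — verifying that distinct reduced words really do yield distinct $\Pi$-itineraries. The admissible itinerary produced for $w$ in the proof of Theorem \ref{lower_bnd} contains auxiliary ``connector'' passage vectors that carry no letter of $w$, and the length-minimizing realization could in principle identify combinatorially different itineraries. I would resolve this by observing that the reduced word $w$ is fully determined by the sequence and signs of $x$- and $y$-crossings recorded in the symbolic code, and that these crossings are in turn recoverable from the $\Pi$-itinerary up to a bounded initial/terminal ambiguity. Any such bounded ambiguity contributes only an $o(1)$ additive correction to the exponential growth rate and therefore does not affect the asymptotic bound $\ln 3/\sqrt{2}$.
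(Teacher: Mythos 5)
Your proposal is correct and follows essentially the same route as the paper's (explicitly labeled) sketch: realize all reduced words of length $n$ by admissible orbit segments of length at most $\sqrt{2}(n+1)$ via the construction in the proof of Theorem \ref{lower_bnd}, count them as $4\cdot 3^{n-1}$, and pass to the logarithmic limit. The paper jumps directly from ``number of different homotopy types'' to the entropy without spelling out the bridge to the counting function $N(T)$; your explicit appeal to the generating partition $\Pi$ of Theorem \ref{htop_thm}, together with the observation that distinct reduced words force distinct $\Pi$-itineraries, supplies exactly the step the sketch leaves implicit, so this is a legitimate and slightly more careful write-up rather than a different argument.
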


\begin{sprf} (A sketch.)
Theorem \ref{lower_bnd} says that the words 
$w\left(\left\{x(t)|\; 0\leq t\leq T\right\}\right)$ corresponding to all orbits
$\left\{x(t)|\; 0\leq t\leq T\right\}$ of length $T$ fill in the ball of radius
$T/\sqrt{2}$ in the Cayley graph of the group $F_2$. Hence the number of different homotopy
types of these orbits $\left\{x(t)|\; 0\leq t\leq T\right\}$ is at least 
$\text{const}\cdot 3^{T/\sqrt{2}}$. Take the natural logarithm of this lower estimate, divide by
$T$, and pass to the limit as $T\to \infty$ to get the claim of the corollary. \qed
\end{sprf}

\medskip

Let $x_{T_0}=\left\{x(t)|\; 0\leq t\leq T_0\right\}$ be a periodic orbit with period $T_0$,
and $w_0=w\left(x_{T_0}\right)\in F_2(a,b)$ the symbolic word corresponding to it. Finally,
let $w_\infty = w_0w_0w_0\dots \in \text{Ends}(F_2)$ be the infinite power of $w_0$.
It is clear that the homotopical rotation number $(t,\, e)=(t,\, w_\infty)\in C$ of the full (periodic) orbit
$x$ exists, i. e.
\[
t=\lim_{T\to \infty} \frac{\left\Vert w\left(\left\{x(t)|\; 0\leq t\leq T\right\}\right)\right\Vert}{T}
= \frac{||w_0||}{T_0},
\]
\[
e=w_\infty =\lim_{T\to \infty} w\left(\left\{x(t)|\; 0\leq t\leq T\right\}\right).
\]
Note that $t=0$ if and only if $w_0=1$. In this case the directional component 
$e=w_\infty \in \text{Ends}(F_2)$ of the rotation number is undefined.

\begin{rem}\label{periodic_rem}
   We observe that in Definition \ref{admis_def} of the admissible homotopical
   rotation set $AR \subset C$ we can select the approximating orbit segments
   $x_i = \{x_i(\tau)|0 \le \tau \le T_i\}$ to be periodic with period $T_i$
   (see Theorem 2.2 of \cite{BMS06}). Thus, the homotopical rotation numbers
   $(t,e) \in C$
   \begin{displaymath}
      e = \lim_{T \rightarrow \infty} w(\{x(\tau)|0 \le \tau \le T\}) \in \text{Ends}(F_2),
   \end{displaymath}
   \begin{displaymath}
      t = \lim_{T \rightarrow \infty} \frac{\|w(\{x(\tau)|0 \le \tau \le
      T\})\|}{T},
   \end{displaymath}
   corresponding to admissible periodic orbits $\{x(\tau)|\tau \in \BR\}$ form
   a dense subset in $AR$.
\end{rem}

\begin{rem}\label{n_obs_rem}
   The problem of defining and thoroughly studying the analogous homotopical
   rotation numbers in the case of $N$ round obstacles in $\mathbb{T}^2$ $(N \ge 2)$
   is much more complex than the case $N=1$. Indeed, the fundamental group
   $G = \pi_1(\mathcal{Q})$ turns out to be the group $\textbf{F}_{N+1}$ freely
   generated by $N+1$ elements $a_1, a_2, \ldots, a_{N+1}$ (see \cite{Mas91}).
   The complexity of the problem is partially explained by the following fact: the
   ``abelianized'' version $G/G^{\prime}$ (where $G^{\prime} = [G,G]$ is the
   commutator subgroup of $G$) is isomorphic to $\BZ^{N+1}$. In the case $N=1$
   the group $\BZ^{N+1} = \BZ^2$ coincides with the lattice group of periodicity of
   the billiard system, and this coincidence establishes a strong connection between
   the newly introduced homotopical (non-commutative) rotation number $(t,e) \in C$
   of a trajectory $\{x(\tau)|\tau \in \BR\}$,
   \begin{displaymath}
      e = \lim_{T \rightarrow \infty} w(\{x(\tau)|0 \le \tau \le T\}) =
      \lim_{T \rightarrow \infty} w_T
   \end{displaymath}
   \begin{displaymath}
      t = \lim_{T \rightarrow \infty} \frac{\|w(\{x(\tau)|0 \le \tau \le
      T\})\|}{T} = \lim_{T \rightarrow \infty} \frac{\|w_T\|}{T}
   \end{displaymath}
   and the traditional (commutative) rotation vector $\rho$ of the same trajectory
   as follows:
   \begin{displaymath}
      \rho = \lim_{T \rightarrow \infty} \frac{1}{T} \pi(w_T) \in \BR^2,
   \end{displaymath}
   where $\pi: G \rightarrow G/G^{\prime} = \BZ^2$ is the natural projection.
   Clearly, there is no such straightforward correspondence between the two types of
   rotation numbers (vectors) in the case $N \ge 2$.
\end{rem}

\begin{rem}\label{arb_obs_rem}
   If one carefully studies all the proofs and arguments of this paper,
   it becomes obvious that the round shape of the sole obstacle $\mathcal{O}$
   was essentially not used. Thus, all the above results carry over to any other
   billiard table model on $\mathbb{T}^2$ with a single strictly convex obstacle with
   smooth boundary $\partial\mathcal{O}$, provided that $\mathcal{O}$ is small in the
   sense of \cite{BMS06}, i.e., $\mathcal{O}$ is contained in a disk of radius $r_0$,
   $r_0 < \sqrt{2}/4$. (And $r_0 < \sqrt{5}/10$ for Theorem \ref{lower_bnd}.)
\end{rem}

\begin{rem}\label{dim_rem}
   One can ask similar questions (regarding the noncommutative rotation numbers/sets)
   for toroidal billiards in the configuration space $\mathcal{Q}$, where \begin{displaymath}
      \mathcal{Q} = \mathbb{T}^d\setminus\bigcup_{i=1}^N \mathcal{O}_i,
   \end{displaymath}
   with $d \ge 3$ and $N$ mutually disjoint, compact, strictly convex obstacles
   $\mathcal{O}_i$ with smooth boundaries. Such a space $\mathcal{Q}$ is, obviously,
   homotopically equivalent to the $d$-torus $\mathbb{T}^d$ with $N$ points removed
   from it (a ``punctured torus''); however, due to the assumption $d \ge 3$, the
   fundamental group $\pi_1(\mathcal{Q})$ of such a space is
   naturally isomorphic to $\pi_1(\mathbb{T}^d) \cong \BZ^d$, for the
   homotopical deformations of loops can always avoid the removed $N$ points.
   Thus, for such a system the homotopical rotation numbers and sets coincide
   with the usual commutative notions, studied in \cite{BMS06}.
\end{rem}
\bibliography{rotation2bib} 
\end{document}